\newcommand{\pdfgraphics}{\ifpdf\DeclareGraphicsExtensions{.pdf,.jpg}\else\fi}
\definecolor{hanblue}{rgb}{0.27, 0.42, 0.81}
\definecolor{red}{rgb}{1.0, 0.0, 0.0}
\theoremstyle{plain}
\newtheorem{teo}{Theorem}[section]
\newtheorem{lemma}[teo]{Lemma}
\newtheorem{prop}[teo]{Proposition}
\newtheorem{cor}[teo]{Corollary}
\theoremstyle{definition}
\newtheorem{defn}[teo]{Definition}
\theoremstyle{remark}
\newtheorem{rem}[teo]{Remark}
\numberwithin{equation}{section}
\newcommand{\de}{\ensuremath{\,\mathrm d}} 
\renewcommand{\d}{\ensuremath{\mathrm d}} 
\newcommand{\st}{\ensuremath{\ :\ }} 
\newcommand{\eqdef}{\ensuremath{\vcentcolon=}}
\newcommand \eps{\ensuremath{\varepsilon}} 
\renewcommand{\epsilon}{\varepsilon}
\newcommand{\N}{\ensuremath{\mathbb N}}
\newcommand{\R}{\ensuremath{\mathbb R}}
\newcommand{\TT}{\mathsf{T}}
\newcommand{\NN}{\mathsf{N}}
\DeclarePairedDelimiter\scal{\langle}{\rangle} 
\newcommand{\Ell}{\mathscr{L}}
\newcommand{\G}{\mathcal{G}} 
\renewcommand{\t}{\mathfrak{t}} 
\renewcommand*\env@matrix[1][*\c@MaxMatrixCols c]{%
  \hskip -\arraycolsep
  \let\@ifnextchar\new@ifnextchar
  \array{#1}}
\begin{document}

\pdfgraphics 

\title{On the uniqueness of nondegenerate blowups for the motion by curvature of networks}

\author{Alessandra Pluda \footnote{\href{mailto:alessandra.pluda@unipi.it}{alessandra.pluda@unipi.it}, Dipartimento di Matematica, Universit\`a di Pisa, Largo Bruno Pontecorvo 5, 56127 Pisa, Italy.} \and Marco Pozzetta \footnote{\href{mailtto:marco.pozzetta@unina.it}{marco.pozzetta@unina.it}, Dipartimento di Matematica e Applicazioni, Universit\`a di Napoli Federico II, Via Cintia, Monte S. Angelo 80126 Napoli, Italy}}




\date{\today}

\maketitle

\vspace{-0.5cm}
\begin{abstract}
\noindent In this note we prove uniqueness of nondegenerate compact blowups for the motion by curvature of planar networks. The proof follows ideas introduced in \cite{PludaPozzMinimalNetworks} for the study of stability properties of critical points of the length functional and it is based on the application of a \L ojasiewicz--Simon gradient inequality. 
\end{abstract}

\textbf{Mathematics Subject Classification (2020)}: 53E10, 53A04, 46N20, 35B44.



    
    

\section{Introduction}

If $G$ is an abstract one-dimensional graph, see \cref{def:Networks}, a network $\Gamma:G\to\R^2$ is, roughly speaking, an embedding of $G$ in $\R^2$ of class $H^2$. Topologically, we only consider graphs whose junctions have order three, and we will be mainly interested in regular networks, i.e., networks such that the tangent vectors of the immersions given by $\Gamma$ forms equal angles, equal to $\tfrac23\pi$, at any junction.

A motion by curvature $\Gamma_t:G\to \R^2$ is a smooth one-parameter family of networks such that the normal velocity of the evolution is equal to the curvature of the immersions along each edge, see \cref{def:Motion}.  The motion by curvature generalizes the one-dimensional mean curvature flow, called curve shortening flow, to singular one-dimensional objects given by planar networks.
The basic theory concerning short and long time behavior of this geometric flow can be found in \cite{Bronsardreitich, MantegazzaNovagaTortorelli, mannovplusch, GMP}. Just like classical mean curvature flow, the motion by curvature may develop singularities determined by the fact the either the length of some edge goes to zero or the $L^2$-norm of the curvature diverges in finite time. A great deal of work has been done to understand the nature of these singularities
and to define the flow past singularities~\cite{MantegazzaNovagaTortorelli,MaMaNo16,mannovplusch,BaldiHausMan,IlmNevSch,liramazplusae,MaNoPl21,Chang,Chang2}.

A classical procedure employed in order to analyze a singularity for a flow $\Gamma_t:G\to\R^2$ is to perform a \emph{blowup} of the motion. Classically, if $\Gamma_t$ is defined on times $[0,T)$ and $x_0\in \R^2$ is fixed, this means to consider sequences of parabolically rescaled motions by curvature $\Gamma^{\mu_j}_t\eqdef \mu_j \big( \Gamma_{-\mu_j^{-2}t+T}-x_0\big):G\to\R^2$ and to pass to the limit in space-time as $\mu_j\to+\infty$. Up to subsequence, one shows that there exists such a limit $\Gamma^\infty_t$, which is called tangent flow, and it is a homothetically shrinking motion by curvature, see \cite[Section 8]{mannovplusch}. The shape of $\Gamma^\infty_t$ at any time identifies the motion by curvature $\Gamma_t$ at infinitesimal scales around $x_0$ as $t$ approaches $T$. However, the construction may clearly depend on the choice of the sequence $\mu_j$, and it is today an open problem to prove uniqueness of blowups, i.e., independence of the sequence $\mu_j$, for a motion by curvature, or for the mean curvature flow, in full generality. Some of the most influential contributions to the uniqueness of blowups for mean curvature flow are contained in \cite{Schulze}, for the case of compact blowups, \cite{ColdingMinicozziUniqueness}, for the case of cylindrical blowups, \cite{ChodoshSchulze}, for the case of conical blowups.\\
An equivalent way for proving uniqueness of blowups, employed also in the above mentioned works, is to prove that the so-called rescaled motion by curvature $\widetilde\Gamma_\t\eqdef \big(2e^{-\t}\big)^{-\frac12}\big( \Gamma_{T-e^{-\t}} - x_0\big):G\to\R^2$ has a full limit as $\t\to+\infty$, see \cref{sec:Motion}. As in the case of tangent flows, along sequences $\t_n\to+\infty$, one always finds limits of $\widetilde\Gamma_{\t_n}$, but the limit may depend on the diverging sequence of times.

In this note, we prove that if one of such sequential limits $\Gamma_*$ of $\widetilde\Gamma_{\t_n}$ is nondegenerate and compact, encoded in the fact that $\Gamma_*:G\to \R^2$ is a regular network for the same graph $G$ which is the domain of the original flow $\Gamma_t:G\to\R^2$, then it is unique.
For the precise definitions of the terminology used in the next statement, we refer to \cref{def:Networks}, \cref{def:Motion}, \cref{def:RescaledMotion}, and \cref{rem:EquivalenzaTangentRescaled} below.

\begin{teo}\label{thm:Uniqueness}
Let $\Gamma_t:G\to \R^2$ be a motion by curvature defined on times $t\in[0,T)$ with $T<+\infty$, where $G$ is a graph without endpoints.

If $\Gamma_*:G\to \R^2$ is a nonempty nondegenerate blowup of $\Gamma_t$ at some $x_0 \in \R^2$ and time $T$, then $\Gamma_*$ is the unique blowup of $\Gamma_t$ at point $x_0$ and time $T$.

More precisely, the following two (equivalent) statements hold true.
\begin{enumerate}
    \item The rescaled motion by curvature $\widetilde\Gamma_t$ at point $x_0$ and time $T$ smoothly converges to $\Gamma_*$ as $t\to+\infty$, up to reparametrizations.
    
    \item For any diverging sequence $\mu_j\nearrow +\infty$, the sequence of parabolically rescaled motions by curvature $\Gamma^{\mu_j}_t$ converges locally smoothly in space-time to the homothetically shrinking motion by curvature $\Gamma^\infty_t$ such that $\Gamma^\infty_{-\frac12}=\Gamma_*$.
\end{enumerate}
\end{teo}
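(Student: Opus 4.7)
The plan is to derive the full convergence in (1) from a \L ojasiewicz--Simon gradient inequality applied to the Gaussian-weighted length (\emph{shrinker}) functional $\mathcal{F}$, defined on regular networks by $\mathcal{F}(\Gamma)\eqdef\int_G e^{-|\Gamma|^2/4}\,\de s$, following the approach of \cite{PludaPozzMinimalNetworks}. Three facts are crucial: the rescaled motion by curvature $\widetilde\Gamma_\t$ is, in the normal directions, the $L^2$-gradient flow of $\mathcal{F}$; by Huisken's monotonicity formula adapted to networks, $\mathcal{F}(\widetilde\Gamma_\t)$ is non-increasing, with dissipation equal to the squared $L^2$-norm of the normal velocity (the boundary terms at the junctions vanishing thanks to the $\tfrac{2\pi}{3}$-condition); and any sequential limit $\Gamma_*$ of $\widetilde\Gamma_{\t_n}$ is a critical point of $\mathcal{F}$, i.e.\ a self-shrinking regular network. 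The assumption that $\Gamma_*$ is nondegenerate and compact (same underlying graph $G$ as $\Gamma_t$) is precisely what makes the abstract \L ojasiewicz--Simon machinery applicable here.

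\textbf{Main steps.} First, I would set up a Banach space of regular networks written as normal graphs over $\Gamma_*$ respecting the $\tfrac{2\pi}{3}$-condition at the junctions, so that $\mathcal{F}$ is analytic on this space and its second variation at $\Gamma_*$ is Fredholm; the nondegeneracy assumption then supplies the hypotheses of the abstract \L ojasiewicz--Simon theorem, yielding constants $C,\sigma>0$ and $\theta\in(0,\tfrac12]$ with
\[
|\mathcal{F}(\Gamma)-\mathcal{F}(\Gamma_*)|^{1-\theta}\leq C\,\|\nabla\mathcal{F}(\Gamma)\|_{L^2}
\]
for every admissible $\Gamma$ in a $\sigma$-neighborhood of $\Gamma_*$. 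Second, starting from the sequential convergence along some $\t_n\to+\infty$ given by the blowup hypothesis, I would run the classical \L ojasiewicz argument: on any interval where $\widetilde\Gamma_\t$ (suitably reparametrized) lies in the $\sigma$-neighborhood, the chain rule together with the gradient-flow identity gives
\[
-\frac{\d}{\d\t}\bigl(\mathcal{F}(\widetilde\Gamma_\t)-\mathcal{F}(\Gamma_*)\bigr)^\theta\geq c\,\|\partial_\t\widetilde\Gamma_\t\|_{L^2}.
\]
A standard continuity argument then shows that the orbit cannot escape the $\sigma$-neighborhood for $\t$ large enough, hence $\int^{+\infty}\|\partial_\t\widetilde\Gamma_\t\|_{L^2}\,\d\t<+\infty$ and $\widetilde\Gamma_\t\to\Gamma_*$ in $L^2$ as $\t\to+\infty$. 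This $L^2$-convergence is then bootstrapped to smooth convergence (up to reparametrization) via standard parabolic regularity for the rescaled flow and interpolation, using the smooth sequential convergence along $\t_n$. The equivalence between (1) and (2) is a direct consequence of the time change $\t=-\log(2(T-t))$ relating the rescaled and parabolically rescaled flows, as made explicit in \cref{rem:EquivalenzaTangentRescaled}.

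\textbf{Main obstacle.} The most delicate point is the reparametrization. A motion by curvature of networks is defined only up to tangential reparametrizations along each edge, and the triple junctions constrain the admissible normal perturbations in a nontrivial way. To apply the \L ojasiewicz--Simon inequality, one must replace $\widetilde\Gamma_\t$ by a reparametrized family that is a normal graph over $\Gamma_*$ preserving the $\tfrac{2\pi}{3}$-condition at the junctions, and control this reparametrization uniformly in $\t$. Moreover, one must verify that the dissipation provided by the monotonicity formula dominates the time derivative in the Banach norm used for the \L ojasiewicz--Simon inequality, so that the integrability $\int\|\partial_\t\widetilde\Gamma_\t\|\,\d\t<+\infty$ really forces Cauchyness of the orbit in the ambient space. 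These points are the parabolic analogs of technical issues addressed in the elliptic setting in \cite{PludaPozzMinimalNetworks} and would have to be adapted to the motion by curvature here.
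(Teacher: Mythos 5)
Your overall strategy coincides with the paper's: the theorem is reduced to full convergence of the rescaled flow (\cref{thm:Convergence}), which is the $L^2$-gradient flow of the Gaussian energy $\G$, and full convergence is obtained from sequential convergence via a \L ojasiewicz--Simon inequality at the compact shrinker $\Gamma_*$ (\cref{cor:LojaShrinker}), with the equivalence of (1) and (2) handled exactly as in \cref{rem:EquivalenzaTangentRescaled}. The bootstrap from $L^2$-Cauchyness to smooth convergence via interpolation and uniform higher-order bounds on the rescaled flow is also how the paper proceeds.

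There is, however, one concrete step in your plan that fails as written: you propose to ``set up a Banach space of regular networks written as normal graphs over $\Gamma_*$ respecting the $\tfrac{2\pi}{3}$-condition.'' Such a space cannot parametrize any neighborhood of $\Gamma_*$. If at a junction $m$ the three concurring edges are perturbed purely normally, $\gamma^\ell=\gamma^\ell_*+\NN^\ell\nu^\ell_*$, then the common displacement $v$ of the junction point must satisfy $v=\NN^\ell(e^\ell_m)\,\nu^\ell_*(e^\ell_m)$ for all three $\ell$ simultaneously; since the three normals are pairwise at angle $\tfrac{2}{3}\pi$, this forces $v=0$, so purely normal graphs cannot move the junctions at all and miss an open set of nearby regular networks (in particular, they cannot capture the reparametrized $\widetilde\Gamma_\t$). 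You correctly flag the reparametrization at the junctions as the ``main obstacle,'' but the fix is not a normal graph: the paper (following \cite{PludaPozzMinimalNetworks}) writes $\gamma^\ell=\gamma^\ell_*+\NN^\ell\nu^\ell_*+\TT^\ell\tau^\ell_*$ where the tangential components $\TT^\ell$ are \emph{adapted} to the normal ones, i.e.\ prescribed near each junction as an explicit linear function of the boundary values of the $\NN$'s, localized by a cutoff (\cref{lem:SufficientConditionsNetwork}, \cref{def:Adapted}, \cref{prop:ParametrizzazioneTNtempo}). This keeps the configuration space a linear space $V$ of $N$-tuples $\overline{\NN}$ subject only to the linear constraint $\sum_{\ell\in I_m}(-1)^{e^\ell_m}\NN^\ell(e^\ell_m)=0$, on which $\G$ is analytic with Fredholm second variation, so the abstract \L ojasiewicz--Simon theorem applies. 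The same adapted structure is what resolves your second worry (that the dissipation $\|(\partial_\t\widehat\Gamma_\t)^\perp\|_{L^2}$ controls $\|\partial_\t\overline{\NN}_\t\|_{L^2}$): since $\partial_\t\TT^\ell_\t$ is a fixed linear function of the $\partial_\t\NN_\t$'s and $\scal{\widehat\nu^\ell,\tau^\ell_*}$ is small, one gets the pointwise lower bound of item \ref{it:r2} in the proof of \cref{thm:Convergence}. One further remark: the hypotheses of the abstract \L ojasiewicz--Simon theorem (analyticity, Fredholmness of index zero) are not supplied by any nondegeneracy of $\Gamma_*$ as a critical point; the ``nondegeneracy'' of the blowup in the sense of the paper (a regular, multiplicity-one network on the same compact graph $G$) is used only to make the graph parametrization over $\Gamma_*$ available.
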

We remark that the above uniqueness theorem holds true regardless of assumptions on whether $T$ is a singularity time for $\Gamma_t$ of Type I or Type II, see \cite[remark 8.21]{mannovplusch}. On the other hand, a necessary consequence of the uniqueness result in \cref{thm:Uniqueness} is the fact that if $\Gamma_t$ develops a singularity at time $T$ and a blowup at $T$ is a nondegenerate network $\Gamma_*:G\to\R^2$, then such singularity is, in fact, a Type I singularity (see \cite[Remark 8.24]{mannovplusch}).

\cref{thm:Uniqueness} is a consequence of \cref{thm:Convergence} below, which proves smooth convergence of a rescaled motion $\widetilde\Gamma_\t$. The great advantage of working with the flow $\widetilde\Gamma_\t$ is that such evolution turns out to be the $L^2$-gradient flow of the \emph{Gaussian energy} defined by
\begin{equation*}
    \G(\Gamma)\eqdef \sum_i \int_0^1 e^{-|\gamma^i(x)|^2/2} \de s ,
\end{equation*}
for any network $\Gamma:G\to\R^2$.
With the obvious meaning of symbols $\sum_i \int_{\gamma^i}$ and $\int_\Gamma$, we shall equivalently write $\G(\Gamma)=\sum_i \int_{\gamma^i} e^{-|p|^2/2} = \int_\Gamma e^{-|p|^2/2}$.\\
The gradient flow structure permits to apply a general strategy for proving full convergence of a flow out of its sequential convergence along sequences of times, see \cite{PozzettaLoja}. Such strategy is based on the application of a \L ojasiewicz--Simon gradient inequality \cite{Si83, Ch03, ChFaSc09} derived on the energy functional $\G$. We derive such inequality in \cref{cor:LojaShrinker} below.

The main difficulty here is that such an inequality would readily follow by functional analytic results \cite{Ch03} relying on the possibility of parametrizing networks as normal graphs over fixed critical points of $\G$. However, the nonsmooth structure of networks necessarily implies technical complications, since networks close to a fixed one $\Gamma_*$ cannot be written as normal graphs over $\Gamma_*$. Hence, in order to perform a graph parametrization of networks close to $\Gamma_*$ we will need to allow for graphs having both a normal and a tangential component with respect to $\Gamma_*$. The issue of performing such a graph parametrization and at the same time of achieving the linear functional analytic setup needed for deriving the \L ojasiewicz--Simon inequality was investigated in \cite{PludaPozzMinimalNetworks} for the study of stability properties networks that are critical points of the length functional. We shall recall from \cite{PludaPozzMinimalNetworks} the tools needed here at the beginning of \cref{sec:Blowups}, and we refer to \cite{PludaPozzMinimalNetworks} for a more detailed account on the method.

\bigskip
\textbf{Organization.} In \cref{sec:Preliminaries} we recall basic definitions on networks, motion by curvature, and blowups. In \cref{sec:Blowups} we first derive the \L ojasiewicz--Simon inequality at critical points of $\G$, \cref{cor:LojaShrinker}, and then we prove convergence of the rescaled motion by curvature, \cref{thm:Convergence}, implying the main result \cref{thm:Uniqueness}.


\section{Preliminaries}\label{sec:Preliminaries}

\subsection{Networks}

For a regular curve $\gamma:[0,1]\to \R^2$ of class $H^2$, we define the tangent vector $\tau_\gamma\eqdef\frac{\gamma'}{|\gamma'|}$,  and the normal vector $\nu_\gamma \eqdef {\rm R}(\tau_\gamma)$, where ${\rm R}$ denotes counterclockwise rotation of $\tfrac\pi2$. We define $\de s_\gamma\eqdef |\gamma'| \de x$ the arclength element and $\partial_s\eqdef |\gamma'|^{-1}\partial_x$ the arclength derivative. The curvature of $\gamma$ is the vector
\[
\boldsymbol{k}_\gamma\eqdef \partial_s^2 \gamma.
\]
We shall usually drop the subscript $\gamma$ when there is no risk of confusion.

\begin{defn}[Networks]\label{def:Networks}
Fix $N\in\mathbb{N}$ and let $i\in\{1,\ldots, N\}$, $E^i:=[0,1]\times\{i\}$,
$E:=\bigcup_{i=1}^N E^i$ and 
$V:=\bigcup_{i=1}^N \{0,1\}\times\{i\}$.

Let $\sim$ be an equivalence relation that identifies points of $V$.
Let $G\eqdef E\,/\,\sim$ be the graph given by the topological quotient space of $E$ induced by $\sim$. Assume that $G$ is connected.

Denoting by $\pi:E\to G$ the projection onto the quotient, an \emph{endpoint} is a point $p \in G$ such that $\pi^{-1}(p) \subset V$ and it is a singleton, a \emph{junction} is a point $m \in G$ such that $\pi^{-1}(m) \subset V$ and it is not a singleton.

For any $(e^i,i) \in  E^i$ such that $\pi(e^i,i)$ is a junction, we assume that there exist exactly two boundary points $(e^j,j) \in E^j, (e^k,k) \in E^k$, for $j\neq k$ distinct from $i$, such that $\pi(e^i,i)=\pi(e^j,j)=\pi(e^k,k)$. A graph with such property is said to be \emph{regular}.

We denote by $J_G$ and $P_G$ the set of junctions and endpoints, respectively.

A \emph{network} is a continuous map $\Gamma: G\to \mathbb{R}^2$ such that $\Gamma|_{E^i}$ is either a $C^1$-embedding or a constant map. We say that $\Gamma$ is of class $C^k$ or smooth, if any embedding $\Gamma|_{E^i}$ is of class $C^k$ or $C^\infty$, respectively.

The \emph{inner tangent vector} of a regular curve $\gamma^i\eqdef\Gamma|_{E^i}$ at $e\in\{0,1\}$ is the vector
\[
(-1)^{e} \frac{(\gamma^i)'(e)}{|(\gamma^i)'(e)|}.
\]

A \emph{triple junctions} (or \emph{nondegenerate}) \emph{network} is an embedding $\Gamma:G\to \R^2$ such that any restriction $\Gamma|_{E^i}$ is nonconstant.

A network is said to be \emph{regular} if it is triple junctions and whenever $\pi(e^i,i)=\pi(e^j,j)=\pi(e^k,k)$ is a junction then any two inner tangent vectors of $\gamma^i,\gamma^j,\gamma^k$ at $e^i,e^j,e^k$, respectively, form an angle equal to $\tfrac23 \pi$.
\end{defn}

Without loss of generality, if $\Gamma:G\to\R^2$ is a network and $p \in G$ is an endpoint with $\pi^{-1}(p)=(e,i)$,
we will assume that $e=1$.

\bigskip

In order to treat blowups of the motion by curvature, we introduce the following definition of complete network.

\begin{defn}[Complete regular network]
Fix $N\in\mathbb{N}$ and let $i\in\{1,\ldots, N\}$. Let $E^i\eqdef I^i\times \{i\}$, where $I^i$ is any of the intervals $[0,1]$, $(0,1]$, $[1,0)$, or $(0,1)$. Let $E\eqdef \cup_i E^i$.

Let $\sim$ be an equivalence relation that identifies points on the boundaries of the $E^i$'s. Consider a graph $H\eqdef \cup_i E^i \,/\,\sim $ and let $\pi$ be the projection onto the quotient. Assume that $H$ is connected and that for any $(e^i,i) \in  E^i$ there exist exactly two boundary points $(e^j,j) \in E^j, (e^k,k) \in E^k$, for $j\neq k$ distinct from $i$, such that $\pi(e^i,i)=\pi(e^j,j)=\pi(e^k,k)$. Such a graph is said to be \emph{open}.

A continuous map $\Gamma: H\to \mathbb{R}^2$ such that $\Gamma|_{E^i}$ is either a $C^1$-embedding or a constant map is still said to be a \emph{network}.

A \emph{complete regular network} is an embedding $\Gamma:H\to \R^2$ such that $\Gamma|_{E^i}$ is a complete embedding of class $C^1$ for any $i$ and whenever $\pi(e^i,i)=\pi(e^j,j)=\pi(e^k,k)$ is a junction then any two inner tangent vectors of $\gamma^i,\gamma^j,\gamma^k$ at $e^i,e^j,e^k$, respectively, form an angle equal to $\tfrac23 \pi$.
\end{defn}

Whenever a graph $G$ is not explicitly said to be open, then it is assumed to be as in \cref{def:Networks}; in particular, such a $G$ is compact.

Observe that, by definition, a complete regular network has no endpoints. Also, a regular network $\Gamma:G\to \R^2$ such that $G$ has no endpoints is automatically a complete regular network. This choice is coherent with the fact that we shall restrict ourselves to the study of compact blowups.

\subsection{Motion by curvature and blowups}\label{sec:Motion}

\begin{defn}[Motion by curvature]\label{def:Motion}
A smooth one-parameter family of regular networks $\Gamma_t:G\to \R^2$, for $t \in [0,T)$, is a \textit{motion by curvature} if the parametrizations $\gamma^i_t$ of $\Gamma_t$ satisfy
\begin{equation}\label{eq:motion}
    \begin{split}
        \gamma^i_t(p)=P^i & \qquad \forall\, t \in [0,T), \,\,\forall\,p \text{ endpoint of $G$},\\
        \left\langle\partial_t\gamma^i(t,x),\nu^i(t,x)\right\rangle\nu^i(t,x)
        =\boldsymbol{k}^i(t,x)
        & \qquad \forall\, t \in [0,T), \,\,\forall\,x \in (0,1),
    \end{split}
\end{equation}
for some points $P^i \in \R^2$.
The solution is assumed to be maximal, i.e., we ask that it does not exists
another solution defined on $[0,\widetilde{T})$ with $\widetilde{T}>T$.
\end{defn}

For ease of notation, we shall often write that $\scal{\partial_t \Gamma_t, \nu_t}\nu_t = \boldsymbol{k}_t$ understanding that the relation holds along any edge $E^i$.

Exploiting the analysis carried out in \cite{GMP}, we can assume that the parametrizations of a motion by curvature evolve according to the \emph{special flow}, i.e., the evolution is determined by the non-degenerate parabolic second order equation
\begin{equation}\label{eq:SpecialFlow}
\partial_t \gamma^i_t = 
\frac{\partial^2_x\gamma^i_t}{|\partial_x\gamma^i_t|^2}\,.
\end{equation}

Throughout the paper, we will assume that $T<+\infty$ is the maximal time of existence for a motion by curvature, which thus develops a singularity at time $T$. In view of \cite{mannovplusch}, this means that either the inferior limit of the length of at least one curve of the network is zero as $t\to T^-$, or the superior limit of the $L^2$-norm of the curvature is $+\infty$ as $t\to T^-$.

\begin{defn}[Rescaled motion by curvature]\label{def:RescaledMotion}
If $\Gamma_t:G\to\R^2$ is a motion by curvature, the \emph{rescaled motion by curvature at $x_0\in\R^2$ and time $T$} is the flow
\begin{equation*}
    \widetilde{\Gamma}_{x_0}: [-\tfrac12\log T, +\infty)\times G \to \R^2
    \qquad\qquad
    \widetilde{\Gamma}_{x_0}(\t,x) \eqdef \frac{\Gamma_t(x)-x_0}{\sqrt{2(T-t)}},
\end{equation*}
where $\t(t)\eqdef -\tfrac12\log(T-t)$.
We shall denote $\widetilde{\Gamma}_\t(\cdot) \eqdef \widetilde{\Gamma}_{x_0}(\t,\cdot)$, omitting the dependence on $x_0$.
\end{defn}

Without loss of generality, in the definition of rescaled motion we shall always assume that $x_0=0$ is the origin of $\R^2$.

A direct computation shows that a rescaled motion by curvature evolves according to
\[
\partial_\t \widetilde{\Gamma}_\t = \sqrt{2(T-t)}\partial_t \Gamma_t + \widetilde{\Gamma}_\t,
\]
for $t(\t)=T-e^{-2\t}$, understanding that the relation holds along any edge $E^i$. In particular normal and tangent vectors of $\widetilde{\Gamma}_\t$ coincide with the ones of $\Gamma_t$, that is $\widetilde{\nu}_\t(x)=\nu_t(x)$ and $\widetilde{\tau}_\t(x)=\tau_t(x)$, and the normal velocity is
\[
\scal{\partial_\t \widetilde{\Gamma}_\t, \widetilde{\nu}_\t} \widetilde{\nu}_\t = \widetilde{\boldsymbol{k}}_\t + \scal{ \widetilde{\Gamma}_\t, \widetilde{\nu}_\t} \widetilde{\nu}_\t.
\]

In view of \cite[Section 7.2]{mannovplusch} and \cref{prop:FirstVariation} below, it turns out that a rescaled motion by curvature $\widetilde{\Gamma}_\t$ is an $L^2$-gradient flow of the Gaussian energy $\G$. Moreover, a complete regular network $\Gamma_*:H\to \R^2$ is critical point for $\G$, i.e., $\boldsymbol{k}_* + \scal{\Gamma_*,\nu_*}\nu_*=0$, if and only if $\Gamma_*$ is a \emph{shrinker}, that is, $\Gamma_*$ is complete regular and the motion by curvature starting from $\Gamma_*$ evolves homothetically contracting.

\begin{teo}[{\cite[Proposition 8.20, Remark 8.21]{mannovplusch}}]\label{thm:SubconvergenceRescaled}
Let $\widetilde{\Gamma}_\t$ be a rescaled motion by curvature at the origin $0$ and at maximal time $T$ of existence of a motion by curvature $\Gamma_t$. Assume that $0$ is not an endpoint of the motion $\Gamma_t$. Then the following hold.
\begin{enumerate}
    \item There exists a diverging sequence of times $\t_n\to+\infty$ and a (possibly empty) open subgraph $H\subset G$ such that: for any $E^i\subset G\setminus H$ and any $R>0$ it holds $\widetilde{\gamma}^i_{\t_n}(E^i)\cap B_R(0)=\emptyset$ for any $n$ large enough, for any $E^i\cap H \subset  H$ the curves $\widetilde{\gamma}^i_{\t_n}$ converge in $C^1 \cap H^2$ locally on $E^i\cap H$, up to reparametrization, to either a constant curve or to an embedding of class $H^2$.
    
    \item If also the sequence $\widetilde{\Gamma}_{\t_n}|_H$ converges to a complete regular network $\Gamma_*:H\to\R^2$ in $C^1 \cap H^2$ locally on $H$, up to reparametrizations, then convergence holds in $C^k$ locally on $H$ for any $k \in \N$.
\end{enumerate}
\end{teo}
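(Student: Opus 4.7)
The plan is to exploit the gradient flow structure of the rescaled motion with respect to the Gaussian energy $\G$, which will give a Huisken-type monotonicity formula
\begin{equation*}
\frac{\de}{\de \t} \G(\widetilde{\Gamma}_\t) = -\int_{\widetilde{\Gamma}_\t} \bigl|\widetilde{\boldsymbol{k}}_\t + \scal{\widetilde{\Gamma}_\t, \widetilde{\nu}_\t}\widetilde{\nu}_\t\bigr|^2\, e^{-|p|^2/2}\de s,
\end{equation*}
from the first variation formula of $\G$ (cf.\ \cref{prop:FirstVariation}). Since $\G \geq 0$ and is non-increasing along the flow, the right-hand side is integrable in $\t$, and therefore there exists a diverging sequence $\t_n\to+\infty$ along which the $L^2$-norm of the shrinker residual $\widetilde{\boldsymbol{k}}_{\t_n} + \scal{\widetilde{\Gamma}_{\t_n},\widetilde{\nu}_{\t_n}}\widetilde{\nu}_{\t_n}$ tends to zero. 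The decay of $\G$ together with the parabolic regularization results of \cite{GMP} will moreover yield locally uniform $C^\infty$ bounds on the curvature and its derivatives of $\widetilde{\Gamma}_{\t_n}$ on any compact set of $\R^2$.

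For item (1), I would work edge by edge. On each $E^i$, the uniform local curvature bounds combined with the length bounds inside balls $B_R(0)$ (coming from the uniform Gaussian mass bound) allow one to apply Arzelà--Ascoli after reparametrizing by arclength. A diagonalization over an exhausting sequence $R\nearrow+\infty$ produces a subsequence such that each restriction $\widetilde{\gamma}^i_{\t_n}$ exhibits a definite asymptotic behavior inside every ball $B_R(0)$: either the image escapes any compact ball (the edge contributes to $G\setminus H$), or it collapses to a point (a degenerate limit), or it converges locally in $C^1\cap H^2$ up to reparametrization to an $H^2$-embedding. The subgraph $H$ is then defined as the union of the parts where one obtains genuine embeddings, glued at those junctions whose image in the limit remains at finite distance from the origin. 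Note that the $\tfrac23\pi$ angle condition is preserved in $C^1$-limits, so no additional work is needed to check regularity at surviving junctions.

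For item (2), the improvement from $C^1\cap H^2$ to $C^k$ convergence is a bootstrap on the rescaled special flow equation, which along each edge is a nondegenerate quasilinear parabolic system modeled on \eqref{eq:SpecialFlow}. Once one knows that the limit $\Gamma_*$ is a smooth complete regular network (being a smooth solution of the elliptic shrinker equation), interior parabolic Schauder estimates give $C^k$ bounds away from the junctions, while the $\tfrac23\pi$ angle condition furnishes the compatibility at junctions needed to bootstrap regularity up to them; iterating yields the claim. The main obstacle in the whole argument is the bookkeeping in item (1): one has to identify cleanly which edges escape to infinity, which ones collapse, and how the surviving pieces are glued into the open subgraph $H$, while ruling out pathological configurations such as a single edge which accumulates multiple limit components inside $B_R(0)$, or junctions whose three incident edges behave inconsistently. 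The length estimates obtainable from the parabolic regularization, together with the local curvature bounds, are precisely what allow to exclude these pathologies and make the extraction of $H$ rigorous.
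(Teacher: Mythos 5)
This statement is quoted from \cite[Proposition 8.20, Remark 8.21]{mannovplusch} and the paper offers no proof of it, so I can only assess your argument on its own terms; it contains a genuine gap. In your first paragraph you claim that the decay of $\G$ together with parabolic regularization yields \emph{locally uniform $C^\infty$ bounds} on the curvature of $\widetilde{\Gamma}_{\t_n}$ on compact sets. This is false in general and, were it true, it would render item (2) of the theorem vacuous: the two-part structure of the statement exists precisely because the monotonicity formula only gives $\int_\t \int_{\widetilde\Gamma_\t} |\widetilde{\boldsymbol{k}}_\t + \scal{\widetilde\Gamma_\t,\widetilde\nu_\t}\widetilde\nu_\t|^2 e^{-|p|^2/2}\de s \de\t < \infty$, hence the shrinker residual tends to $0$ in the weighted $L^2$ norm only along a suitable sequence $\t_n$; combined with the local length bounds coming from the monotone Gaussian density, this yields local $L^2$ bounds on $\widetilde{\boldsymbol{k}}_{\t_n}$ and therefore only $H^2_{loc}$ compactness, i.e.\ $C^1\cap H^2$ subconvergence up to reparametrization. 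For a Type II singularity $\sup|\widetilde{\boldsymbol{k}}_\t|$ is unbounded along the rescaled flow, so no uniform pointwise curvature bound is available at this stage. Your argument for item (1) survives once you replace ``uniform local curvature bounds'' by these $L^2_{loc}$ bounds along $\t_n$: Arzel\`a--Ascoli via the embedding $H^2\hookrightarrow C^{1,1/2}$ works as you describe, and the bookkeeping of escaping versus collapsing edges that you flag is indeed the technical core of that item.

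The gap is fatal, however, for item (2) as you argue it. A Schauder or bootstrap scheme for the special flow \eqref{eq:SpecialFlow} requires an a priori $L^\infty$ bound on the curvature in a space-time neighborhood before it can be started, and the hypothesis of item (2) only provides $C^1\cap H^2$ convergence at the discrete times $\t_n$. The essential content of the cited result is exactly the passage from ``multiplicity-one regular limit along a sequence'' to ``uniform pointwise curvature bounds on backward parabolic neighborhoods,'' which in \cite{mannovplusch} is obtained through a local regularity theorem adapted to networks, exploiting that a regular multiplicity-one limit forces the Gaussian density ratios to be close to those of a static configuration at small scales; only after that step do interior parabolic estimates and the compatibility conditions at the triple junctions yield $C^k$ convergence. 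Your sketch assumes this key step rather than proving it.
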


In the notation of the first item in \cref{thm:SubconvergenceRescaled}, the network $\Gamma_*:H\to\R^2$ given by the limit immersions $\Gamma_*|_{E^i}\eqdef\lim_n \widetilde\gamma^i_{\t_n}$ is said to be a \emph{blowup of $\Gamma_t$ (at $0$ and time $T$)}.

If the second item in \cref{thm:SubconvergenceRescaled} occurs, we say that $\Gamma_*$ is a \emph{nondegenerate blowup of $\Gamma_t$ (at $0$ and time $T$)}.

\medskip

In the second item of \cref{thm:SubconvergenceRescaled}, the requirement that $\widetilde{\Gamma}_{\t_n}|_H$ converges to a complete regular network $\Gamma_*:H\to\R^2$ is equivalent to the fact that $\widetilde{\gamma}^i_{\t_n}$ converges to an embedding $\gamma^i_*$ for any $E^i\cap H \subset  H$ up to reparametrizations, and $\gamma^i_*(E^i\cap H)$ intersects $\gamma^j_*(E^j \cap H)$ for $i\neq j$ only at junctions, i.e., the limit network has multiplicity one.\\
We remark that nondegenerate blowups of a motion by curvature are automatically shrinkers.

An alternative and equivalent way for defining blowups is given by the already mentioned concept of tangent flow, recalled in the next remark.

\begin{rem}[Parabolic rescaling and tangent flows]\label{rem:EquivalenzaTangentRescaled}
Let $\Gamma_t:G\to\R^2$ be a motion by curvature defined on the time interval $[0,T)$. For any $\mu>0$, the \emph{parabolically rescaled motion by curvature $\Gamma^\mu_t$ (at point $x_0$ and time $T$)} is the evolution $\Gamma^\mu_t:G\to \R^2$ given by
\begin{equation*}
    \Gamma^\mu_t|_{E^i}(x) \eqdef \mu \big( \Gamma_{\mu^{-2}\,t+ T}|_{E^i}(x)- x_0\big),
\end{equation*}
for $t \in [-\mu^2T,0)$ and any $i=1,\ldots,N$ and $x \in E^i$. It is readily checked that $\Gamma^\mu_t$ is still a motion by curvature.\\
Suppose without loss of generality that $x_0=0$, then $\Gamma^\mu_t$ is related to the rescaled motion by curvature $\widetilde\Gamma_\t$ at time $T$ via the identities
\begin{equation*}
    \Gamma^\mu_t = \sqrt{-2t}\, \widetilde\Gamma_{\log(\mu/\sqrt{-t})},
    \qquad
    \widetilde\Gamma_\t = \frac{e^\t}{\mu\sqrt{2}}\Gamma^\mu_{-\mu^2e^{-2\t}}.
\end{equation*}
In particular $\Gamma^\mu_{-\frac12} = \widetilde\Gamma_{\log(\mu\sqrt{2})}$, whenever $-\mu^2T\le -\tfrac12$.\\
Consider now a sequence $\mu_j\nearrow+\infty$, together with the sequence of flows $\Gamma^{\mu_j}_t$. Exploiting the above identities and \cref{thm:SubconvergenceRescaled}, or (equivalently) \cite[Proposition 8.16, Lemma 8.18]{mannovplusch}, one can prove that, up to extracting a subsequence, there is an open subgraph $H\subset G$ such that: for any $E^i\subset G\setminus H$ and any $R>0$ it holds $\Gamma^{\mu_j}_t(E^i)\cap B_R(0)=\emptyset$ for any $t<0$ and $j$ large enough, for any $E^i\cap H \subset  H$ the curves $\Gamma^{\mu_j}_t|_{E^i}$ converge in $C^1 \cap H^2$ locally on $E^i\cap H$ to curves $\Gamma^\infty_t|_{E^i}$ for any $t<0$ as $j\to+\infty$, up to reparametrization, which are either constant or embeddings of class $H^2$. The flow $\Gamma^\infty_t$ is said to be a \emph{tangent flow} to $\Gamma_t$.\\
In case $\Gamma^\infty_t:H\to \R^2$ is regular for some (hence, any) $t$, then $\Gamma^\infty_t$ is a homothetically shrinking motion by curvature and $\Gamma^\infty_{-\frac12}$ is the limit of the rescaled motion $\widetilde\Gamma_\t$ along the sequence of times $\t_j=\log(\mu_j\sqrt{2})$.\\ Conversely, if $\Gamma_*$ is a sequential limit of $\widetilde\Gamma_\t$ along some sequence of times $\t_n$ as in \cref{thm:SubconvergenceRescaled}, then there exists the tangent flow $\Gamma^\infty_t$ obtained as the limit of the parabolic rescalings $\Gamma^{\mu_n}_t$ taking $\mu_n= e^{\t_n}/\sqrt{2}$.\\
Hence blowups of a motion by curvature are equivalently identified by tangent flows or by sequential limits of the rescaled motion by curvature, see also \cite[Remark 8.27, Remark 7.4]{mannovplusch}.
\end{rem}

\begin{rem}
In the notation and hypotheses of \cref{thm:SubconvergenceRescaled}, if item 2 occurs, if also $\Gamma_*(H)$ is compact and nonempty, then $H=G$ is a regular graph without endpoints and $\Gamma_*:G\to \R^2$ is a regular network.
\end{rem}

\section{Uniqueness of blowups}\label{sec:Blowups}

We need to recall the setup introduced in \cite{PludaPozzMinimalNetworks} in order to canonically parametrize networks close in $H^2$ to a regular one $\Gamma_*$ suitably as ``graphs'' over $\Gamma_*$.

For a given regular network $\Gamma_*$, for parametrizations of the form $\gamma^\ell(x) \eqdef \gamma^\ell_*(x) + \NN^\ell(x)\nu^\ell_*(x) + \TT^\ell(x)\tau^\ell_*(x)$, the next lemma contains sufficient conditions to be imposed on the functions $\NN^\ell,\TT^\ell$ so that the resulting parametrizations $\gamma^\ell$ define a triple junctions network. It readily seen that such conditions are also necessary \cite[Lemma 3.1]{PludaPozzMinimalNetworks}.

\begin{lemma}[{\cite[Lemma 3.2]{PludaPozzMinimalNetworks}}]\label{lem:SufficientConditionsNetwork}
Let $\Gamma_*:G\to \R^2$ be a regular network. 
Then there exists $\eps_{\Gamma_*}>0$ such that 
for every $\NN^\ell,\TT^\ell \in C^1([0,1])$, 
with $\|\NN^\ell\|_{C^1}, \|\TT^\ell\|_{C^1} \le \eps_{\Gamma_*}$ fulfilling at any junction $m=\pi(e^i,i)=\pi(e^j,j)=\pi(e^k,k)$
the following identities  
\begin{equation}\label{eq:ww}
\begin{split}
&\TT^i(e^i) = 
-\frac{1}{\sqrt{3}}\NN^i(e^i)
-\frac{2}{\sqrt{3}}(-1)^{e_i+e_j}\NN^j(e^j) \,,\\
&\TT^j(e^j) = 
\frac{2}{\sqrt{3}}(-1)^{e_i+e_j}\NN^i(e^i)   +\frac{1}{\sqrt{3}}\NN^j(e^j)\,,\\
&\TT^k(e^k) = 
-\frac{1}{\sqrt{3}}(-1)^{e_i+e_k}\NN^i(e^i)
+\frac{1}{\sqrt{3}}(-1)^{e_j+e_k}\NN^j(e^j)\,,\\
&(-1)^{e^i}\NN^i(e^i) + (-1)^{e^j}\NN^j(e^j) + (-1)^{e^k}\NN^k(e^k) = 0\,,
\end{split}
\end{equation}
the maps
\begin{equation*}
    \gamma^\ell(x) \eqdef \gamma^\ell_*(x) + \NN^\ell(x)\nu^\ell_*(x) + \TT^\ell(x)\tau^\ell_*(x),
\end{equation*}
define a triple junctions network.
\end{lemma}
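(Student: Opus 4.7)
The claim is essentially a local linear algebra statement at each junction of $\Gamma_*$, combined with a smallness perturbation argument to handle the global embedding property. First I would fix a junction $m = \pi(e^i,i) = \pi(e^j,j) = \pi(e^k,k)$ and introduce the inner tangent and inner normal vectors $T_\ell \eqdef (-1)^{e^\ell}\tau^\ell_*(e^\ell)$ and $N_\ell \eqdef (-1)^{e^\ell}\nu^\ell_*(e^\ell) = \mathrm{R}\,T_\ell$ for $\ell\in\{i,j,k\}$. Regularity of $\Gamma_*$ forces $\{T_i,T_j,T_k\}$ to be unit vectors at mutual angles $\tfrac{2\pi}{3}$, so that after a cyclic relabeling the planar rotation of angle $\tfrac{2\pi}{3}$ sends $T_i\mapsto T_j$ and $T_j\mapsto T_k$, with analogous relations for $N_\ell$. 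Since $\gamma^\ell_*(e^\ell) = m$, one has
\[
\gamma^\ell(e^\ell)-m = (-1)^{e^\ell}\bigl(\NN^\ell(e^\ell)\, N_\ell + \TT^\ell(e^\ell)\, T_\ell\bigr), \qquad \ell\in\{i,j,k\},
\]
so continuity of the glued map $\Gamma$ at $m$ is equivalent to the system of two vector equations asserting that these three displacements coincide, i.e., four scalar linear equations in the six unknowns $\NN^\ell(e^\ell), \TT^\ell(e^\ell)$.

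Next, I would expand these vector equations in the orthonormal basis $\{T_i, N_i\}$ using $T_j = -\tfrac12 T_i + \tfrac{\sqrt3}{2}N_i$, $N_j = -\tfrac{\sqrt3}{2}T_i - \tfrac12 N_i$, $T_k = -\tfrac12 T_i - \tfrac{\sqrt3}{2}N_i$, $N_k = \tfrac{\sqrt3}{2}T_i - \tfrac12 N_i$. Taking scalar products of $\gamma^\ell(e^\ell) - m$ with $T_i$ and $N_i$ for $\ell = j, k$ and equating them to $(-1)^{e^i}\TT^i(e^i)$ and $(-1)^{e^i}\NN^i(e^i)$, respectively, yields four scalar linear identities. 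A sign-careful rearrangement --- inverting the $\pm\tfrac12$ and $\pm\tfrac{\sqrt3}{2}$ coefficients to produce the $\tfrac{1}{\sqrt3}, \tfrac{2}{\sqrt3}$ of the lemma and tracking the $(-1)^{e^\ell + e^{\ell'}}$ factors coming from the inner-tangent conventions --- shows that these four relations are precisely the three formulas for $\TT^i(e^i), \TT^j(e^j), \TT^k(e^k)$ together with the balance identity $(-1)^{e^i}\NN^i(e^i)+(-1)^{e^j}\NN^j(e^j)+(-1)^{e^k}\NN^k(e^k)=0$ of~\eqref{eq:ww}. Hence~\eqref{eq:ww} at every junction is equivalent to continuity of $\Gamma$.

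Once continuity at every junction is secured, the pieces $\gamma^\ell$ glue into a continuous map $\Gamma:G\to\R^2$. For $\eps_{\Gamma_*}$ sufficiently small, the $C^1$-smallness of $\NN^\ell$ and $\TT^\ell$ makes each $\gamma^\ell$ a $C^1$-small perturbation of the regular embedding $\gamma^\ell_*$, hence itself a regular (in particular nonconstant) $C^1$-embedding of $[0,1]$. A standard $C^0$-perturbation argument --- using compactness of $G$ together with the fact that distinct edges of $\Gamma_*(G)$ are uniformly bounded away from one another outside the common junctions --- then shows $\Gamma$ is globally injective. Combined with junction continuity, this gives that $\Gamma$ is a triple junctions network in the sense of~\cref{def:Networks}.

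The main technical obstacle lies in the second step: the $(-1)^{e^\ell}$ factors must be tracked consistently with the orientation chosen around the junction (the labeling $i,j,k$ can be clockwise or counterclockwise, and the lemma is stated for one such order), and the collapse of one of the four scalar conditions into a pure $\NN$-balance identity --- rather than an explicit formula for $\NN^k(e^k)$ in terms of $\NN^i(e^i), \NN^j(e^j)$ --- requires regrouping terms via $T_i+T_j+T_k=0$. Once the computation is organized by treating $\NN^i(e^i)$ and $\NN^j(e^j)$ as the two free parameters, reflecting the two-dimensional freedom of the junction position in $\R^2$, the four identities of~\eqref{eq:ww} arise naturally.
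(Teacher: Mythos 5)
The paper offers no proof of this lemma: it is imported verbatim from \cite[Lemma 3.2]{PludaPozzMinimalNetworks}, so there is no in-text argument to compare against. Your route --- reduce junction continuity to a $4\times 6$ linear system in the boundary values $\NN^\ell(e^\ell),\TT^\ell(e^\ell)$, expand in the orthonormal frame $\{T_i,N_i\}$ of inner tangents and normals, and then handle embeddedness by $C^1$-perturbation --- is the natural direct one and is essentially what the cited proof does. Your linear algebra checks out: with $T_j=R_{2\pi/3}T_i$ and $N_\ell=\mathrm{R}T_\ell$, equating the three displacements $(-1)^{e^\ell}(\NN^\ell N_\ell+\TT^\ell T_\ell)$ reproduces exactly the three $\TT$-formulas and the $\NN$-balance of \eqref{eq:ww}. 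Your caveat about orientation is real: if the labeling $i<j<k$ runs clockwise around the junction ($T_j=R_{-2\pi/3}T_i$), the signs of the $\tfrac{2}{\sqrt3}$ terms flip, so \eqref{eq:ww} as written presupposes the counterclockwise convention fixed in the source paper; this is a feature of the statement, not a defect of your argument.

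The one step that does not go through as written is global injectivity. You derive it from ``distinct edges of $\Gamma_*(G)$ are uniformly bounded away from one another outside the common junctions'' plus a $C^0$-perturbation argument, but this covers only the region away from junctions. Near a junction $m$ the three unperturbed arcs accumulate at the single point $m$, so no $C^0$-smallness can prevent two perturbed arcs from crossing in an arbitrarily small neighbourhood of the (possibly displaced) junction point. What saves the argument is precisely the $C^1$-control: the perturbed arcs emanate from their common endpoint with inner tangents within $O(\eps_{\Gamma_*})$ of three directions at mutual angles $\tfrac{2}{3}\pi$, and a cone (transversality) argument then shows that two $C^1$-close arcs issuing from the same point in directions bounded away from each other can meet only at that point, locally. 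You should state this junction-local step explicitly; without it the claim that $\Gamma$ is an embedding of $G$ is not justified. Similarly, ``nonconstant'' and ``$C^1$-embedding'' for each single edge do follow from $\|\NN^\ell\|_{C^1},\|\TT^\ell\|_{C^1}\le\eps_{\Gamma_*}$ as you say, since $|\partial_x\gamma^\ell|\ge|\partial_x\gamma^\ell_*|-C\eps_{\Gamma_*}>0$.
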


The above result motivates the following definitions.

\begin{defn}
Let $G$ be a regular graph. At any junction $m=\pi(e^i,i)=\pi(e^j,j)=\pi(e^k,k)$, if $i<j<k$, we denote by $L^i,L^j,L^k$ the linear maps
\begin{equation*}
\begin{split}
    &L^i(a,b) = -\frac{1}{\sqrt{3}}a
    -\frac{2}{\sqrt{3}}(-1)^{e_i+e_j}b\,,\\
     &L^j(a,b) = \frac{2}{\sqrt{3}}(-1)^{e_i+e_j}a + 
     \frac{1}{\sqrt{3}}b\,,\\
     &L^k(a,b) =-\frac{1}{\sqrt{3}}(-1)^{e_i+e_k}a + \frac{1}{\sqrt{3}}(-1)^{e_j+e_k}b\,.
\end{split}
\end{equation*}
Moreover, we denote by $I_m$ the set of indices $\ell$ such that $E^\ell$ has an endpoint at $m$, and we denote by $e^\ell_m\in\{0,1\}$ the endpoint of $E^\ell$ at $m$, for $\ell \in I_m$.\\
Furthermore, for $\ell \in I_m$, we denote by $\Ell^\ell_m$ the linear operator $L^i,L^j,L^k$, depending on whether $\ell$ is the minimal, intermediate, or maximal index $\ell \in I_m$.

\end{defn}

From now on and for the rest of the paper, we fix a nonincreasing smooth cut-off function
\begin{equation*}
    \chi:[0,1/2]\to [0,1], 
    \quad 
    \chi|_{[0,\frac18]}=1,
    \quad
    \chi|_{[\frac38,\frac12]}\equiv 0.
\end{equation*}

\begin{defn}[Adapted tangent functions]\label{def:Adapted}
Let $G$ be a regular graph. Let $\NN^i,\TT^i:[0,1]\to \R$ be functions of class $C^1$, for $i=1,\ldots,N$. We say that the $\TT^i$'s are \emph{adapted} to the $\NN^i$'s whenever
\begin{equation}\label{eq:DefAdattati}
    \TT^\ell(|e^\ell_m - x|) = \chi(x) \Ell^\ell_m(\NN^i(|e^i_m - x|), \NN^j(|e^j_m-x|) ),
\end{equation}
for $x \in [0,\tfrac12]$ for any junction $m=\pi(e^i,i)=\pi(e^j,j)=\pi(e^k,k)$, and
\begin{equation*}
    \TT^i(x)=0
\end{equation*}
for $x \in [\tfrac12,1]$ for any endpoint $\pi(1,i)$.
\end{defn}

The next proposition introduce a canonical way of writing a one-parameter family of triple junctions networks close in $H^2$ to a regular one $\Gamma_*$ as graphs over $\Gamma_*$.\\
We shall apply the next result also to the particular case in which $\Gamma_t\equiv \Gamma$ for any $t$.

\begin{prop}[{\cite[Proposition 3.4, Corollary 3.5]{PludaPozzMinimalNetworks}}]\label{prop:ParametrizzazioneTNtempo}
Let $\Gamma_*:G\to \R^2$ be a smooth regular network of a graph $G$ without endpoints. Then there exist $\eps_{\Gamma_*}>0$ such that whenever $\Gamma_t:G\to \R^2$ is a one-parameter family of triple junctions networks of class $H^2$, differentiable with respect to $t$ for $t \in [t_0-h,t_0+h]$ and $h>0$, such that $(t,x)\mapsto \partial_t \gamma^i_t(x)$ is continuous for any $i$ and
\begin{align*}
     \sum_i \| \gamma^i_* - \gamma^i_t \|_{H^2(\d x)}
     \le \eps_{\Gamma_*},
\end{align*}
for any $i,t$, then there exist $h'\in(0,h)$ and functions $\NN^i_t, \TT^i_t \in H^2(\d x)$ and reparametrizations $\varphi^i_t:[0,1]\to[0,1]$ of class $H^2(\d x)$, continuously differentiable with respect to $t$ for $t \in [t_0-h',t_0+h']$ such that
\begin{equation*}
      \gamma^i_t\circ \varphi^i_t(x) = \gamma^i_*(x) + \NN^i_t(x)\nu^i_*(x) + \TT^i_t(x)\tau^i_*(x).
\end{equation*}

Moreover, the $\TT^i_t$'s are adapted to the $\NN^i_t$'s and
\begin{itemize}
    \item for any $\delta>0$ there is $\eps \in (0,\eps_{\Gamma_*})$ such that
    \begin{equation}\label{eq:StimaH2NiTempo}
        \sum_i \| \gamma^i_* - \gamma^i_t \|_{H^2(\d x)} \le \eps \quad\forall\,t
    \quad\implies\quad  
    \sum_i \|\NN^i_t\|_{H^2(\d x)}+ \|\varphi^i_t(x)-x\|_{H^2(\d x)} \le \delta,
    \end{equation}
    for any $t \in [t_0-h',t_0+h']$;
    
    \item for any $\eta>0$ and $m\in \N$ there is $\eps_{\eta,m}\in(0,\eps_{\Gamma_*})$ and $h_{\eta,m} \in (0,h)$ such that if $\sum_i \| \gamma^i_* - \gamma^i_t \|_{C^{m+1}([0,1])} \le \eps$ for any $t$, then
    \begin{equation}\label{eq:StimaSobolevNiTempo}
    \sum_i \| \NN^i_t\|_{H^m(\d x)} \le \eta,
    \end{equation}
    for any $t \in [t_0-h_{\eta,m}, t_0 + h_{\eta,m}]$.
    \end{itemize}
\end{prop}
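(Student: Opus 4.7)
The plan is to apply the implicit function theorem in Banach spaces to a map that encodes both the graph decomposition and the adapted constraint on the tangent functions. Let $X$ be the closed subspace of $H^2([0,1];\R)^N \times H^2([0,1];\R)^N$ of tuples $(\NN,\varphi)$ satisfying $\varphi^i(0)=0$, $\varphi^i(1)=1$ and, at every junction $m=\pi(e^i,i)=\pi(e^j,j)=\pi(e^k,k)$, the scalar identity
$$(-1)^{e^i_m}\NN^i(e^i_m) + (-1)^{e^j_m}\NN^j(e^j_m) + (-1)^{e^k_m}\NN^k(e^k_m) = 0;$$
let $Y$ be the subspace of $H^2([0,1];\R^2)^N$ of tuples $(f^i)_i$ with $f^i(e^i_m)=f^j(e^j_m)=f^k(e^k_m)$ at every junction. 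For each $\NN$, the adapted tuple $\TT[\NN]$ defined by \eqref{eq:DefAdattati} depends linearly and boundedly on $\NN$. Consider the $C^1$ map
$$F:X \times \mathcal U \to Y, \qquad F^i(\NN,\varphi;\Gamma) \eqdef \gamma^i_* + \NN^i\nu^i_* + \TT^i[\NN]\tau^i_* - \gamma^i\circ\varphi^i,$$
for $\mathcal U$ a small $H^2$-neighborhood of $\Gamma_*$ in the triple-junctions network parametrizations; by \cref{lem:SufficientConditionsNetwork} and the triple-junctions structure of $\Gamma$ (together with $\varphi^i$ preserving $\{0,1\}$), $F$ indeed lands in $Y$, and $F(0,\mathrm{id};\Gamma_*)=0$.

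Next, I would verify that $L := D_{(\NN,\varphi)}F(0,\mathrm{id};\Gamma_*) : X \to Y$ is a Banach isomorphism. Decomposing edgewise in the frame $(\nu^i_*,\tau^i_*)$,
$$L(\dot\NN,\dot\varphi) = \Bigl( \dot\NN^i\,\nu^i_* + \bigl(D\TT^i[0]\dot\NN - |(\gamma^i_*)'|\,\dot\varphi^i\bigr)\tau^i_* \Bigr)_i,$$
so one inverts edgewise by $\dot\NN^i = f^i_\nu$ and $\dot\varphi^i = (D\TT^i[0]f_\nu - f^i_\tau)/|(\gamma^i_*)'|$. At junctions $\dot\varphi^i$ vanishes, so the equation reduces to $\dot\NN^\ell\,\nu^\ell_* + \Ell^\ell_m(\dot\NN^i,\dot\NN^j)\,\tau^\ell_* = v_m$ for $\ell\in I_m$ and a common target $v_m\in\R^2$; by \cref{lem:SufficientConditionsNetwork}, this linear system has a unique solution satisfying the scalar $X$-constraint. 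The IFT then produces a unique $(\NN_t,\varphi_t)\in X$ solving $F(\NN_t,\varphi_t;\Gamma_t)=0$ for $\Gamma_t$ in a small $H^2$-neighborhood of $\Gamma_*$, depending continuously on $\Gamma_t$; the continuity of $(t,x)\mapsto \partial_t\gamma^i_t(x)$ upgrades this to continuous differentiability in $t$ on a subinterval $[t_0-h',t_0+h']$.

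The smallness estimate \eqref{eq:StimaH2NiTempo} follows immediately from Lipschitz continuity of the IFT inverse. For the higher-order estimate \eqref{eq:StimaSobolevNiTempo}, one argues by bootstrap: projecting the defining identity $\NN^i_t\nu^i_* + \TT^i[\NN_t]\tau^i_* = \gamma^i_t\circ\varphi^i_t - \gamma^i_*$ onto $\nu^i_*$ and $\tau^i_*$ recovers $\NN^i_t$ and $\varphi^i_t$ as (implicit) algebraic expressions in $\gamma^i_t$, $\gamma^i_*$, and $\NN_t$ at nearby points, and inductively differentiating $m$ times together with the $C^{m+1}$-closeness of $\gamma^i_t$ to $\gamma^i_*$ yields the claimed $H^m$-smallness on a possibly shorter time interval.

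The main obstacle I expect is the algebraic compatibility underlying the isomorphism step: one must check that, at each junction, the single scalar constraint on $\dot\NN$ defining $X$ corresponds exactly, through the operators $\Ell^\ell_m$ and the endpoint orientations $(-1)^{e^\ell_m}$, to the two degrees of freedom of the common junction value in $Y$. This compatibility is by design of \cref{lem:SufficientConditionsNetwork} and \cref{def:Adapted}, but the bookkeeping is delicate; once it is in place, the remainder is standard Banach-space calculus.
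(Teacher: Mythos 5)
Two remarks before the substance: the paper itself does not prove this proposition but imports it from \cite{PludaPozzMinimalNetworks}, so the comparison is with the argument there; and the junction bookkeeping that you single out as the main obstacle is in fact fine. Indeed, by the necessity of the conditions \eqref{eq:ww} (\cite[Lemma 3.1]{PludaPozzMinimalNetworks}), the linear map sending an admissible triple $(\dot\NN^\ell(e^\ell_m))_{\ell\in I_m}$ to the common junction value $\dot\NN^\ell\nu^\ell_*+\Ell^\ell_m(\dot\NN^{i},\dot\NN^{j})\tau^\ell_*\in\R^2$ is an isomorphism between two-dimensional spaces, and your candidate $\dot\varphi^\ell$ does vanish at junction endpoints, so your $L$ is a bijection $X\to Y$.

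The genuine gap is the unjustified assertion that $F$ is $C^1$. The partial derivative of $F$ with respect to $\varphi$ would have to be $\dot\varphi\mapsto-(\gamma'\circ\varphi)\,\dot\varphi$, and for a network $\Gamma$ of class $H^2$ only --- exactly the regularity the hypotheses allow --- this does not take values in $Y\subset H^2$: two derivatives of $(\gamma'\circ\varphi)\dot\varphi$ produce $\gamma'''\circ\varphi$, which need not exist. Nor can one fall back on a ``strict differentiability at the base point, uniformly in the parameter'' version of the implicit function theorem: writing $\gamma=\gamma_*+u$ with $\|u\|_{H^2}\le\eps$, the remainder $\gamma\circ\varphi_1-\gamma\circ\varphi_2-\gamma_*'\,(\varphi_1-\varphi_2)$ contains the term $\big(u''\circ\varphi_1-u''\circ\varphi_2\big)(\varphi_1')^2$, and for $u''\in L^2$ oscillating at frequency $k$ with $\|\varphi_1-\varphi_2\|_{H^2}\sim 1/k$ this term has $L^2$-norm of order $\|u''\|_{L^2}$, which is not $o(\|\varphi_1-\varphi_2\|_{H^2})$. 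So the Banach-space implicit function theorem cannot be applied in $H^2$ to the map $F$ as you set it up; composition with the reparametrization loses a derivative in exactly the variable you are solving for. The proof in \cite{PludaPozzMinimalNetworks} avoids this by never differentiating $\varphi\mapsto\gamma\circ\varphi$ in a function space: for each fixed $x$ it solves the finite-dimensional (coupled across the three edges of a junction, through \eqref{eq:DefAdattati}) scalar system prescribing the tangential component of $\gamma^\ell_t(\varphi^\ell)-\gamma^\ell_*$, using the finite-dimensional implicit function theorem, which only needs $\gamma_t\in C^1$ (guaranteed by $H^2\hookrightarrow C^1$); the $H^2$-regularity of $x\mapsto\varphi^i_t(x)$, the $t$-regularity, and the estimates \eqref{eq:StimaH2NiTempo}--\eqref{eq:StimaSobolevNiTempo} are then recovered a posteriori from the formulas for $(\varphi^i_t)'$ and $(\varphi^i_t)''$ obtained by implicit differentiation. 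Your argument for differentiability in $t$ inherits the same problem, since the hypothesis only gives $\partial_t\gamma^i_t$ continuous in $(t,x)$, not differentiability of $t\mapsto\gamma^i_t$ in $H^2$.
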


\subsection{\L ojasiewicz--Simon inequality for $\G$}

In this section we prove the key \L ojasiewicz--Simon inequality for the functional $\G$. We need to compute first and second variations of $\G$ taken with respect to variations on the normal functions $\NN^i$ (and, consequently, on the adapted tangent functions).

\begin{prop}\label{prop:FirstVariation}
Let $\Gamma_*:G\to \R^2$ be a smooth regular network where $G$ is a graph without endpoints. Then there is $\eps_{\Gamma_*}>0$ such that the following holds.

Let $\NN^i, X^i \in H^2$ with $\|\NN^i\|_{H^2}\le \eps_{\Gamma_*}$ such that
\begin{equation*}
\sum_{\ell \in I_m} (-1)^{e^\ell_m} \NN^\ell(e^\ell_m) = \sum_{\ell \in I_m} (-1)^{e^\ell_m} X^\ell(e^\ell_m)=0
\qquad
\forall m \in J_G.
\end{equation*}
Let $\Gamma^\eps:G\to \R^2$ be the triple junctions network defined by
\begin{equation}\label{eq:zzz}
\gamma^{i,\eps}(x) \eqdef \gamma^i_*(x) + (\NN^i(x)+ \eps X^i(x))\nu^i_*(x) + \TT^{i,\eps}(x)\tau^i_*(x),
\end{equation}
for any $i$, for any $|\eps|<\eps_0$ and some $\eps_0>0$, where the $\TT^{i,\eps}$'s are adapted to the $(\NN^i+ \eps X^i)$'s, for any $|\eps|<\eps_0$.

Call $\Gamma$ the network given by the immersions $\gamma^i\eqdef\gamma^{i,0}$.
Then
\begin{equation}\label{eq:FirstVariation}
\begin{split}
    \frac{\d}{\d \eps}  \G (\Gamma^\eps) \bigg|_0 
    &= 
    \sum_{m \in J_G}  \sum_{\ell \in I_m} 
    (-1)^{1+e^\ell_m} \left[ 
    \scal{\tau^\ell(e^\ell_m), \nu^\ell_*(e^\ell_m)} + \sum_{j \in I_m} h_{\ell j}\scal{\tau^j(e^j_m) , \tau^j_*(e^j_m)}
     \right] e^{-|\gamma^{\ell}(e^\ell_m)|^2/2}
     X^\ell(e^\ell_m) + \\
    &\phantom{=} - \sum_i 
    \int_0^1 \bigg( 
    \scal{(\gamma^i)^\perp+\boldsymbol{k}^i,\nu^i_*}e^{-|\gamma^i|^2/2}|\partial_x \gamma^i| 
    +\\
    &\qquad\qquad\qquad
   + \sum_j f_{ij}\chi \scal{(\gamma^j)^\perp+\boldsymbol{k}^j,\tau^j_*}e^{-|\gamma^j|^2/2}|\partial_x \gamma^j| +\\
    &\qquad\qquad\qquad
    + g_{ij}\chi(1-x) \scal{(\gamma^j)^\perp+\boldsymbol{k}^j,\tau^j_*}(1-x)e^{-|\gamma^j(1-x)|^2/2}|\partial_x \gamma^j|(1-x) 
    \bigg) X^i \de x,
\end{split}
\end{equation}
where $h_{\ell,j}, f_{ij}, g_{ij}\in \R$ only depend on the topology of $G$.

If also $\Gamma$ is regular, then
\begin{equation}\label{eq:FirstVariationRegular}
    \begin{split}
        \frac{\d}{\d \eps}  \G (\Gamma^\eps) \bigg|_0 
    &= 
    - \sum_i 
    \int_0^1 \bigg( 
    \scal{(\gamma^i)^\perp+\boldsymbol{k}^i,\nu^i_*}e^{-|\gamma^i|^2/2}|\partial_x \gamma^i| 
    +\\
    &\qquad\qquad\qquad
   + \sum_j f_{ij}\chi \scal{(\gamma^j)^\perp+\boldsymbol{k}^j,\tau^j_*}e^{-|\gamma^j|^2/2}|\partial_x \gamma^j| +\\
    &\qquad\qquad\qquad
    + g_{ij}\chi(1-x) \scal{(\gamma^j)^\perp+\boldsymbol{k}^j,\tau^j_*}(1-x)e^{-|\gamma^j(1-x)|^2/2}|\partial_x \gamma^j|(1-x) 
    \bigg) X^i \de x.
    \end{split}
\end{equation}
\end{prop}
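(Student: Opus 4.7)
The plan is to write $\G(\Gamma^\eps)=\sum_\ell \int_0^1 e^{-|\gamma^{\ell,\eps}|^2/2}|\partial_x \gamma^{\ell,\eps}|\de x$, differentiate at $\eps=0$ on each edge by the standard rule, and perform one integration by parts in order to isolate an interior $L^2$-pairing against $(\gamma^\ell)^\perp+\boldsymbol{k}^\ell$ and a boundary contribution localized at junctions. The first step is to differentiate \eqref{eq:zzz} at $\eps=0$: since the adapted relation \eqref{eq:DefAdattati} is linear in its $\NN$-entries through the operators $\Ell^\ell_m$, one obtains
\[
\partial_\eps \gamma^{\ell,\eps}\big|_0 = X^\ell \nu^\ell_* + \dot\TT^\ell \tau^\ell_* \eqdef W^\ell,
\]
where $\dot\TT^\ell$ is the tangent function adapted to the $X^i$'s through the very same formula \eqref{eq:DefAdattati}. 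In particular $\dot\TT^\ell$ is supported near the two ends of $[0,1]$ via the cutoff $\chi$, and at each junction $m$ one has $\dot\TT^\ell(e^\ell_m)=\Ell^\ell_m\big(X^i(e^i_m),X^j(e^j_m)\big)$, a linear combination of boundary values of the $X$'s meeting at $m$.

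The standard variation identity on each edge reads
\[
\partial_\eps \int_0^1 e^{-|\gamma^{\ell,\eps}|^2/2}|\partial_x\gamma^{\ell,\eps}|\de x\bigg|_0 = \int_0^1\Big(-\scal{\gamma^\ell,W^\ell}e^{-|\gamma^\ell|^2/2} + e^{-|\gamma^\ell|^2/2}\scal{\partial_s W^\ell,\tau^\ell}\Big)|\partial_x\gamma^\ell|\de x.
\]
Integrating by parts the second summand and exploiting $\partial_s\tau^\ell=\boldsymbol{k}^\ell$ together with $\partial_s e^{-|\gamma^\ell|^2/2}=-\scal{\gamma^\ell,\tau^\ell}e^{-|\gamma^\ell|^2/2}$ collapses the interior contribution to
\[
-\int_0^1 \scal{(\gamma^\ell)^\perp + \boldsymbol{k}^\ell, W^\ell} e^{-|\gamma^\ell|^2/2}|\partial_x\gamma^\ell|\de x + \Big[e^{-|\gamma^\ell|^2/2}\scal{W^\ell,\tau^\ell}\Big]_0^1,
\]
where the cancellation of the $\scal{\gamma^\ell,\tau^\ell}\scal{W^\ell,\tau^\ell}$-terms converts $\scal{\gamma^\ell,W^\ell}$ into $\scal{(\gamma^\ell)^\perp,W^\ell}$.

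Summing over $\ell$ and substituting $W^\ell=X^\ell\nu^\ell_*+\dot\TT^\ell\tau^\ell_*$, the $X^\ell \nu^\ell_*$-piece of the interior integral immediately yields the first line of the interior sum in \eqref{eq:FirstVariation}. For the $\dot\TT^\ell\tau^\ell_*$-piece I would unfold \eqref{eq:DefAdattati} and perform, near each junction $m$, the change of variable $y=|e^\ell_m-x|$: by linearity of $\Ell^\ell_m$ in its two arguments $X^i(|e^i_m-y|), X^j(|e^j_m-y|)$, the integral splits as a sum over the neighboring edges $j\in I_m$ of integrals of $X^j$ weighted either by $\chi$ (when the change of variable preserves orientation) or, after the reflection $x\mapsto 1-x$ induced by the sign $(-1)^{e^j_m}$, by $\chi(1-x)$; the resulting combinatorial constants, read off from $L^i,L^j,L^k$, are exactly $f_{ij}$ and $g_{ij}$. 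For the boundary terms, reorganizing $\sum_\ell [\,\cdot\,]_0^1$ as a sum over junctions $m\in J_G$ produces the sign $(-1)^{1+e^\ell_m}$; at a junction the Gaussian factor $e^{-|\gamma^\ell(e^\ell_m)|^2/2}$ is common to the three edges, while the linear formula $\dot\TT^\ell(e^\ell_m)=\Ell^\ell_m(X^i(e^i_m),X^j(e^j_m))$ redistributes the boundary values of $X$ among the three edges. Collecting the coefficient of each $X^\ell(e^\ell_m)$ yields the bracket $\scal{\tau^\ell,\nu^\ell_*}+\sum_j h_{\ell j}\scal{\tau^j,\tau^j_*}$ with topological constants $h_{\ell j}$, which proves \eqref{eq:FirstVariation}.

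To deduce \eqref{eq:FirstVariationRegular} from \eqref{eq:FirstVariation}, I would invoke the constraint $\sum_{\ell\in I_m}(-1)^{e^\ell_m}X^\ell(e^\ell_m)=0$ imposed on admissible variations: it suffices to check that, whenever $\Gamma$ is regular, the bracketed coefficient at each junction is independent of $\ell\in I_m$, so that the junction sum pairs with the constraint and vanishes. This reduces to an algebraic identity on triples of unit vectors sharing the $2\pi/3$-configuration (which $\tau^\ell(e^\ell_m)$ and $\tau^\ell_*(e^\ell_m)$ both obey when $\Gamma,\Gamma_*$ are regular at $m$), verified by expanding $\scal{\tau^j,\tau^j_*}$ and $\scal{\tau^\ell,\nu^\ell_*}$ in the $(\nu^\ell_*,\tau^\ell_*)$-frame and substituting the explicit formulas for $L^i,L^j,L^k$. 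The main obstacle I foresee is precisely this final bookkeeping, together with the careful tracking of the orientation signs $(-1)^{e^\ell_m}$ through the changes of variables $y=|e^\ell_m-x|$ and $x\mapsto 1-x$ performed in the previous step: once one verifies that the operators $\Ell^\ell_m$ — designed in \cite{PludaPozzMinimalNetworks} precisely to enforce the triple-junction condition to first order — produce coefficients balancing the boundary contribution against the linear constraint, the passage from \eqref{eq:FirstVariation} to \eqref{eq:FirstVariationRegular} follows immediately.
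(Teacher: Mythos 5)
Your computation is correct and is essentially the paper's own proof: the authors simply defer to the analogous first-variation computation for the length functional in \cite[Proposition 3.7]{PludaPozzMinimalNetworks}, observing that $\G$ is a length functional weighted by $e^{-|p|^2/2}$, and your differentiation, integration by parts, unfolding of the adapted tangential part via \eqref{eq:DefAdattati}, and regrouping of boundary terms over junctions is precisely that argument carried out explicitly. One remark on your last step: rather than verifying $\ell$-independence of the bracket by expanding in the $(\nu^\ell_*,\tau^\ell_*)$-frame, it is quicker to note that the adapted conditions force $W^i(e^i_m)=W^j(e^j_m)=W^k(e^k_m)=:W_m$ (the junction moves as a single point), so the boundary contribution at $m$ equals $\bigl\langle W_m,\sum_{\ell\in I_m}(-1)^{1+e^\ell_m}\tau^\ell(e^\ell_m)\bigr\rangle e^{-|\gamma(m)|^2/2}$, which vanishes for regular $\Gamma$ because the three inner unit tangents at a $\tfrac23\pi$-junction sum to zero.
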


\begin{proof}
Let us assume first that there is a junction $m$ such that the functions $X^\ell$ appearing in~\eqref{eq:zzz} all vanish except for $\ell \in I_m$. Moreover, for $\ell \in I_m$, assume that $X^\ell$ has compact support in $[0,\tfrac58)$.

Let us denote $m=\pi(e^i,i)=\pi(e^j,j)=\pi(e^k,k)$, where $i<j<k$. Differentiating we get
\begin{equation}\label{eq:www}
    \begin{split}
        \frac{\d}{\d \eps} \G (\Gamma^\eps)
        =  \sum_{\ell \in I_m}\int_0^1 \left(
        -\scal{\gamma^{\ell,\eps}, \partial_\eps\gamma^{\ell,\eps}}+
        \frac{1}{|\partial_x \gamma^{\ell,\eps}|^2} \scal{\partial_x \gamma^{\ell,\eps}, \partial_x \partial_\eps\gamma^{\ell,\eps}}\right) e^{-|\gamma^{\ell,\eps}|^2/2} |\partial_x \gamma^{\ell,\eps}| \de x,
    \end{split}
\end{equation}
indeed, since ${\rm spt}\,X^\ell \subset [0,\tfrac58)$, by \eqref{eq:DefAdattati} and definition of $\chi$, we have that $\TT^{n,\eps}$ does not depend on $\eps$ for all $n \not \in I_m$. More explicitly
\begin{equation}\label{eq:zxzxzx}
    \begin{split}
    \TT^{\ell,\eps}(x) &= \chi(x) \Ell^\ell_m((\NN^i+ \eps X^i)(x), (\NN^j+\eps X^j)(x) ) \\
    &=  \chi(x) \Ell^\ell_m(\NN^i(x), \NN^j(x) ) + \eps \, \chi(x)\Ell^\ell_m(X^i(x), X^j(x) )
\end{split}
\end{equation}
for $\ell \in I_m$, hence, letting
\begin{equation}\label{eq:zww}
    Y^\ell \eqdef \partial_\eps\gamma^{\ell,\eps} = X^\ell\nu^\ell_* + \chi \Ell^\ell_m(X^i,X^j)\tau^\ell_*,
\end{equation}
and integrating by parts, we find
\begin{equation*}
    \begin{split}
        \frac{\d}{\d \eps} \G (\Gamma^\eps) \bigg|_0
        &= \sum_{\ell \in I_m}\int_0^1
        -\scal{\gamma^{\ell}, Y^\ell} e^{-|\gamma^{\ell}|^2/2} |\partial_x \gamma^{\ell}|
        -\left\langle \partial_x\left(\frac{\partial_x\gamma^\ell}{|\partial_x\gamma^\ell|} e^{-|\gamma^{\ell}|^2/2}\right) , Y^\ell\right\rangle \de x + \\
        &\qquad+  \sum_{\ell \in I_m} (-1)^{e^\ell_m+1} \left\langle \frac{\partial_x\gamma^\ell(e^\ell_m)}{|\partial_x\gamma^\ell(e^\ell_m)|} e^{-|\gamma^{\ell}(e^\ell_m)|^2/2}, Y^\ell(e^\ell_m)\right\rangle.
    \end{split}
\end{equation*}
Since $\gamma^{\ell,\eps}(e^\ell_m)=\gamma^{l,\eps}(e^l_m)$ for any $\eps$ and $\ell,l \in I_m$ by \cref{lem:SufficientConditionsNetwork}, then $Y^\ell(e^\ell_m)=Y^l(e^l_m)$ for any $\ell,l \in I_m$. Hence, if $\Gamma$ is also regular, we observe that the boundary term above vanishes, that is $\sum_{\ell \in I_m} (-1)^{e^\ell_m+1} \left\langle \frac{\partial_x\gamma^\ell}{|\partial_x\gamma^\ell|} e^{-|\gamma^{\ell}|^2/2}, Y^\ell\right\rangle \big|_{e^\ell_m}=0$.

Moreover
\begin{equation*}
    \begin{split}
        \frac{\d}{\d \eps} \G (\Gamma^\eps) \bigg|_0
        &=
        \sum_{\ell \in I_m} (-1)^{e^\ell_m+1} \left\langle \tau^\ell(e^\ell_m), Y^\ell(e^\ell_m)\right\rangle e^{-|\gamma^{\ell}(e^\ell_m)|^2/2} + \\
        &\qquad - \sum_{\ell \in I_m} \int_{\gamma^\ell} \scal{\gamma^{\ell}, Y^\ell} e^{-|\gamma^{\ell}|^2/2}
        + \scal{\boldsymbol{k}^\ell, Y^\ell} e^{-|\gamma^{\ell}|^2/2}
        -\scal{\gamma^\ell, \tau^\ell}\scal{\tau^\ell, , Y^\ell} e^{-|\gamma^{\ell}|^2/2} \\
        &= \sum_{\ell \in I_m} (-1)^{e^\ell_m+1} \left\langle \tau^\ell(e^\ell_m), Y^\ell(e^\ell_m)\right\rangle e^{-|\gamma^{\ell}(e^\ell_m)|^2/2} -  \sum_{\ell \in I_m} \int_{\gamma^\ell} \scal{(\gamma^{\ell})^\perp +\boldsymbol{k}^\ell, Y^\ell}  e^{-|\gamma^{\ell}|^2/2}.
    \end{split}
\end{equation*}
Inserting \eqref{eq:zww} then \eqref{eq:FirstVariation} follows, and \eqref{eq:FirstVariationRegular} follows as well as we observed that the boundary terms vanish when $\Gamma$ is regular.
\end{proof}

\begin{prop}\label{prop:SecondVariation}
Let $\Gamma_*:G\to \R^2$ be a smooth regular network where $G$ is a graph without endpoints.
Let $ X^i \in H^2$ such that
\begin{equation*}
\sum_{\ell \in I_m} (-1)^{e^\ell_m} X^\ell(e^\ell_m)=0
\qquad
\forall m \in J_G.
\end{equation*}
Let $\Gamma^\eps:G\to \R^2$ be the triple junctions network defined by
\begin{equation}\label{eq:zzz2}
\gamma^{i,\eps}(x) \eqdef \gamma^i_*(x) + \eps X^i(x)\nu^i_*(x) + \TT^{i,\eps}(x)\tau^i_*(x),
\end{equation}
for any $i$, for any $|\eps|<\eps_0$ and some $\eps_0>0$, where the $\TT^{i,\eps}$'s are adapted to the $(\eps X^i)$'s, for any $|\eps|<\eps_0$

Then
\begin{equation}\label{eq:SecondVariation}
    \begin{split}
        \frac{\d^2}{\d\eps^2} \G (\Gamma^\eps)\bigg|_0 
        &= 
        \sum_{m \in J_G} \sum_{\ell \in I_m} (-1)^{1+e^\ell_m} \left(\partial_s X^\ell (e^\ell_m)  + \sum_{l \in I_m}\omega^l_m(X^l(e^l_m))
        \right) X^\ell (e^\ell_m) e^{-|\gamma^\ell_*(e^\ell_m)|^2/2} + \\
        &\phantom{=}- \sum_i \int_0^1 \bigg( \partial^2_s X^i 
        +X^i
        + \sum_j \Omega_{ij}(\overline{X}) \bigg) X^i e^{-|\gamma^i_*|^2/2}\de s_{\gamma^i_*},
    \end{split}
\end{equation}
where $\partial_s X^n= |\partial_x\gamma^n_*|^{-1}\partial_x X^n$ for any $n$, $\overline{X}\eqdef (X^1,\ldots, X^N)$, $\omega^l_m:\R\to\R$ is linear, and $\Omega_{ij}: [H^2(0,1)]^N\to L^2(0,1)$ is a compact linear operator.
\end{prop}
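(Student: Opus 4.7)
The plan is to mimic the second variation computation for the length functional in \cite{PludaPozzMinimalNetworks}, accounting for the extra Gaussian weight $e^{-|\gamma|^2/2}$, which will produce the zeroth-order term $+X^i$ in the principal part of \eqref{eq:SecondVariation} and will add the exponential factors in the boundary terms. A convenient simplification is that the adapted tangent condition \eqref{eq:DefAdattati} is linear in its arguments through the maps $\Ell^\ell_m$ and $\chi$ does not depend on $\eps$, so the functions $\TT^{i,\eps}$ adapted to $\eps X^i$ are exactly $\eps \TT^i$, where the $\TT^i$'s are adapted to the $X^i$'s. Hence $V^i \eqdef \partial_\eps \gamma^{i,\eps}\big|_0 = X^i \nu^i_* + \TT^i \tau^i_*$ and $\partial^2_\eps \gamma^{i,\eps}\big|_0 = 0$, so the second variation is purely quadratic in $V^i$.

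The first step is to expand the integrand of $\G(\Gamma^\eps)$ to second order in $\eps$. Writing $J^i \eqdef |\partial_x \gamma^i_*|$ and $k^i_* \eqdef \scal{\boldsymbol{k}^i_*, \nu^i_*}$, the Frenet identities yield $\scal{\nu^i_*, \partial_x V^i} = J^i(\partial_s X^i + \TT^i k^i_*)$ and $\scal{\tau^i_*, \partial_x V^i} = J^i(\partial_s \TT^i - X^i k^i_*)$, and a direct Taylor expansion gives
\[
\frac{\d^2}{\d\eps^2}\G(\Gamma^\eps)\bigg|_0 = \sum_i \int_0^1 e^{-|\gamma^i_*|^2/2}\Big[(\partial_s X^i + \TT^i k^i_*)^2 - 2\scal{\gamma^i_*, V^i}(\partial_s \TT^i - X^i k^i_*) + \scal{\gamma^i_*, V^i}^2 - (X^i)^2 - (\TT^i)^2\Big] \de s_{\gamma^i_*}.
\]
The key observation is that the coefficient $-1$ in front of $(X^i)^2+(\TT^i)^2$ arises because the Hessian of $p\mapsto|p|^2/2$ is the identity matrix; this is exactly what produces the $+X^i$ term in the principal part of \eqref{eq:SecondVariation}.

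Next, I would write $(\partial_s X^i + \TT^i k^i_*)^2 = (\partial_s X^i + \TT^i k^i_*)\partial_s X^i + (\partial_s X^i + \TT^i k^i_*)\TT^i k^i_*$ and integrate the first summand by parts, moving the derivative off $X^i$ using $\partial_s e^{-|\gamma^i_*|^2/2} = -\scal{\gamma^i_*, \tau^i_*}\,e^{-|\gamma^i_*|^2/2}$. The principal bulk contribution becomes $-\int (\partial^2_s X^i) X^i e^{-|\gamma^i_*|^2/2}\de s$, while $-\int (X^i)^2 e^{-|\gamma^i_*|^2/2}\de s$ provides the $+X^i$ coefficient. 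All remaining bulk contributions are either $X^i$ multiplied by a smooth function of $\gamma^i_*, \tau^i_*, \nu^i_*, k^i_*$, or $X^i$ and $\partial_s X^i$ multiplied by $\TT^i$ and $\partial_s \TT^i$; by \eqref{eq:DefAdattati}, the latter are linear combinations of $X^l$ and $\partial_s X^l$ for $l\in I_m$ weighted by the cutoff $\chi$ and hence supported near junctions. All such terms together define an operator $\sum_j \Omega_{ij}(\overline X)$ which maps $[H^2(0,1)]^N$ continuously into $H^1(0,1)$, and therefore compactly into $L^2(0,1)$ by the Rellich--Kondrachov embedding.

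The integration by parts produces, at each endpoint $e^\ell_m$ of $E^\ell$, the boundary contribution $(-1)^{1+e^\ell_m}(\partial_s X^\ell + \TT^\ell k^\ell_*)(e^\ell_m)\, X^\ell(e^\ell_m)\, e^{-|\gamma^\ell_*(e^\ell_m)|^2/2}$. Since $\chi(0)=1$, the adapted condition at the junction gives $\TT^\ell(e^\ell_m) = \Ell^\ell_m\bigl(X^i(e^i_m), X^j(e^j_m)\bigr)$, a linear combination of the traces $X^l(e^l_m)$ for $l\in I_m$, and multiplying by $k^\ell_*(e^\ell_m)$ yields exactly the structure $\sum_{l\in I_m}\omega^l_m(X^l(e^l_m))$ in \eqref{eq:SecondVariation}. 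The main technical difficulty I anticipate is the systematic bookkeeping of the tangential variation $\TT^i$, which couples variations across different edges via $\chi$ and via the linear maps $\Ell^\ell_m$ at each junction; this coupling must be tracked carefully to verify that every cross-edge contribution appears either inside the compact operator $\Omega_{ij}$ (when it occurs in the bulk) or inside the scalars $\omega^l_m$ (when it arises from a boundary trace).
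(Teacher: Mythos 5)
Your proposal is correct and takes essentially the same route as the paper's proof: both exploit the linearity of the adapted tangent condition to get $\partial^2_\eps \gamma^{i,\eps}=0$, Taylor-expand $\G$ to second order in the variation field $X^i\nu^i_*+\TT^i\tau^i_*$, integrate by parts to isolate the principal term $-\partial_s^2X^i$ together with the junction traces $(\partial_s X^\ell+\TT^\ell\scal{\boldsymbol{k}^\ell_*,\nu^\ell_*})X^\ell$, and absorb the remaining lower-order and cross-edge contributions into compact operators. Your intermediate identity agrees with the paper's \eqref{eq:zz6} after projecting $Y^\ell$ onto the Frenet frame of $\gamma^\ell_*$; the only cosmetic differences are that the paper localizes to a single junction first and performs the projection after, rather than before, the integrations by parts.
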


\begin{proof}
As in the proof of \cref{prop:FirstVariation}, let us assume first that there is a junction $m$ such that the functions $X^\ell$ appearing in \eqref{eq:zzz2} all vanish except for $\ell \in I_m$. Moreover, for $\ell \in I_m$, assume that $X^\ell$ has compact support in $[0,\tfrac58)$.

Let us denote $m=\pi(e^i,i)=\pi(e^j,j)=\pi(e^k,k)$, where $i<j<k$. Recalling \eqref{eq:www} and setting
\begin{equation}\label{eq:zz4}
    Y^\ell \eqdef \partial_\eps\gamma^{\ell,\eps} = X^\ell\nu^\ell_* + \chi \Ell^\ell_m(X^i,X^j)\tau^\ell_*,
\end{equation}
we get
\begin{equation}\label{eq:zz6}
    \begin{split}
        \frac{\d^2}{\d \eps^2} \G (\Gamma^\eps)\bigg|_0
        &=\sum_{\ell \in I_m}
        \int_0^1 \left(-|Y^\ell|^2 - \frac{2}{|\partial_x\gamma^\ell_*|^4} \scal{\partial_x\gamma^\ell_*, \partial_x Y^\ell}^2 +\frac{1}{|\partial_x\gamma^\ell_*|^2}|\partial_x Y^\ell|^2
        \right) e^{-|\gamma^\ell_*|^2/2} |\partial_x\gamma^\ell_*| + \\
        &\qquad\qquad
        + \left(-\scal{\gamma^\ell_*,Y^\ell} + \frac{1}{|\partial_x\gamma^\ell_*|^2}\scal{\partial_x\gamma^\ell_*,\partial_x Y^\ell}
        \right)^2 e^{-|\gamma^\ell_*|^2/2} |\partial_x\gamma^\ell_*| 
        \de x  \\
        &= \sum_{\ell \in I_m} \int_0^1 
         \left(-|Y^\ell|^2 - 2 \scal{\tau^\ell_*, \partial_s Y^\ell}^2 +|\partial_s Y^\ell|^2
        \right) e^{-|\gamma^\ell_*|^2/2}  + \\
        &\qquad\qquad
        + \left(-\scal{\gamma^\ell_*,Y^\ell} + \scal{\tau^\ell_*,\partial_s Y^\ell}
        \right)^2 e^{-|\gamma^\ell_*|^2/2} 
        \de s  \\
        &= \sum_{\ell \in I_m}
        (-1)^{e^\ell_m+1}\scal{\partial_s Y^\ell(e^\ell_m),  Y^\ell(e^\ell_m)} e^{-|\gamma^\ell_*(e^\ell_m)|^2/2}
        + \\
        &\qquad+\sum_{\ell \in I_m} \int_0^1 \left(
        -\scal{\partial^2_s Y^\ell,Y^\ell} +  \scal{\partial_s Y^\ell,Y^\ell} \scal{\gamma^\ell_*,\tau^\ell_*}
        -|Y^\ell|^2 - 2 \scal{\tau^\ell_*, \partial_s Y^\ell}^2 \right) e^{-|\gamma^\ell_*|^2/2}  + \\
        &\qquad\qquad
        + \left(-\scal{\gamma^\ell_*,Y^\ell} + \scal{\tau^\ell_*,\partial_s Y^\ell}
        \right)^2 e^{-|\gamma^\ell_*|^2/2} 
        \de s \\
        &=  \sum_{\ell \in I_m}
        (-1)^{e^\ell_m+1}\scal{\partial_s Y^\ell(e^\ell_m),  Y^\ell(e^\ell_m)} e^{-|\gamma^\ell_*(e^\ell_m)|^2/2}
        + \\
        &\qquad+\sum_{\ell \in I_m} \int_0^1 \bigg(
        -\scal{\partial^2_s Y^\ell,Y^\ell} 
        -|Y^\ell|^2 
        -  \scal{\tau^\ell_*, \partial_s Y^\ell}^2  + \\
        &\qquad\qquad
        +  \scal{\partial_s Y^\ell,Y^\ell} \scal{\gamma^\ell_*,\tau^\ell_*}
        +\scal{\gamma^\ell_*,Y^\ell}^2 -2\scal{\gamma^\ell_*, Y^\ell} \scal{\tau^\ell_*,\partial_s Y^\ell}
        \bigg) e^{-|\gamma^\ell_*|^2/2} 
        \de s .
    \end{split}
\end{equation}
We integrate by parts
\begin{equation*}
    \begin{split}
        \sum_{\ell \in I_m} &
        \int_0^1 -  \scal{\tau^\ell_*, \partial_s Y^\ell}^2 e^{-|\gamma^\ell_*|^2/2} 
        \de s 
        = \\
        &=\sum_{\ell \in I_m}
        (-1)^{e^\ell_m} \scal{\tau^\ell_*, \partial_s Y^\ell}\scal{\tau^\ell_*,  Y^\ell} e^{-|\gamma^\ell_*|^2/2} \bigg|_{e^\ell_m}
        +\\
        &\qquad+ \sum_{\ell \in I_m} \int_0^1 \bigg(
        \scal{\tau^\ell_*, \partial_s^2 Y^\ell}\scal{\tau^\ell_*,  Y^\ell}
        +\scal{\boldsymbol{k}^\ell_*, \partial_s Y^\ell}\scal{\tau^\ell_*,  Y^\ell}
        + \scal{\tau^\ell_*, \partial_s Y^\ell}\scal{\boldsymbol{k}^\ell_*,  Y^\ell} + \\
        &\qquad\qquad
        - \scal{\tau^\ell_*, \partial_s Y^\ell}\scal{\tau^\ell_*,  Y^\ell}\scal{\gamma^\ell_*, \tau^\ell_*}
        \bigg)e^{-|\gamma^\ell_*|^2/2} 
        \de s .
    \end{split}
\end{equation*}
Inserting into \eqref{eq:zz6} we get
\begin{equation}\label{eq:zz7}
    \begin{split}
        \frac{\d^2}{\d \eps^2} \G (\Gamma^\eps)\bigg|_0
        &= \sum_{\ell \in I_m}
        (-1)^{e^\ell_m+1}\big( \scal{\partial_s Y^\ell(e^\ell_m),  Y^\ell(e^\ell_m)} - \scal{\tau^\ell_*(e^\ell_m), \partial_s Y^\ell (e^\ell_m)}\scal{\tau^\ell_*(e^\ell_m),  Y^\ell (e^\ell_m)} \big) e^{-|\gamma^\ell_*(e^\ell_m)|^2/2}
        + \\
        &\qquad+\sum_{\ell \in I_m}  \int_0^1
        \big\langle -\partial_s^2 Y^\ell + \scal{\partial_s^2 Y^\ell, \tau^\ell_*}\tau^\ell_* - Y^\ell +
        \Omega^\ell(Y^\ell)
        ,
        Y^\ell\big\rangle e^{-|\gamma^\ell_*|^2/2} 
        \de s ,
    \end{split}
\end{equation}
where $\Omega^\ell:H^2(0,1)\to L^2(0,1)$ is a compact operator. Inserting the definition of $Y^\ell$, we see that the second order term with respect to $Y^\ell$ in \eqref{eq:zz7} becomes
\begin{equation*}
    \begin{split}
        -\partial_s^2 Y^\ell + \scal{\partial_s^2 Y^\ell, \tau^\ell_*}\tau^\ell_*
        &= -\partial_s^2 X^\ell \nu^\ell_* - \partial^2_s (\chi \Ell^\ell_m(X^i,X^j) ) \tau^\ell_* +  \partial^2_s (\chi \Ell^\ell_m(X^i,X^j) ) \tau^\ell_*  + \sum_{j \in I_m} \widetilde\Omega_{\ell j}(X^j) \\
        &=  \partial_s^2 X^\ell \nu^\ell_* + \sum_{j \in I_m} \widetilde\Omega_{\ell j}(X^j),
    \end{split}
\end{equation*}
where $\widetilde\Omega_{\ell j}:H^2(0,1)\to [L^2(0,1)]^2$ is a compact operator.\\
Moreover the boundary terms in \eqref{eq:zz7} are
\begin{equation*}
    \begin{split}
        \scal{(\partial_sY^\ell)^\perp, Y^\ell}\big|_{e^\ell_m}
        &=\scal{(\partial_sY^\ell)^\perp, \nu^\ell_*}X^\ell\big|_{e^\ell_m}\\
        &
        =\partial_s X^\ell (e^\ell_m) \, X^\ell (e^\ell_m) + \Ell^\ell_m(X^i(e^i_m),X^j(e^j_m)) \scal{\boldsymbol{k}^\ell_*(e^\ell_m),\nu^\ell_*(e^\ell_m)} X^\ell (e^\ell_m).
    \end{split}
\end{equation*}
Hence \eqref{eq:zz7} eventually implies \eqref{eq:SecondVariation}.
\end{proof}

\bigskip

We now introduce the linear functional analytic setting needed for the proof of the \L ojasiewicz--Simon inequality.\\
For a fixed shrinker $\Gamma_*:G\to \R^2$, where $G$ is a graph without endpoints, we denote by $M\eqdef \sharp J_G$ the cardinality of junctions, and we define the space
\begin{equation}\label{eq:DefV}
\begin{split}
    V \eqdef 
     \bigg\{ \overline{\NN}\eqdef(\NN^1,\ldots,\NN^N) \in [H^2(0,1)]^N  \st 
    &\sum_{\ell \in I_m} (-1)^{e^\ell_m} \NN^\ell(e^\ell_m) =0 \,\, \forall\, m \in J_G 
    \bigg\},
\end{split}
\end{equation}
endowed with the complete scalar product
\begin{equation*}
    \scal{\overline{\NN},\overline{X}}_V \eqdef \sum_i \int_0^1 \big( \NN^iX^i + \partial_s^2\NN^i\partial_s^2X^i \big) e^{-|\gamma^i_*|^2/2} \de s^i_*,
\end{equation*}
where $\d s^i_*\eqdef \d s_{\gamma^i_*}$. We further define the $2$-dimensional spaces
\begin{equation*}
    W_m \eqdef \left\{(v^\ell_m)_{\ell \in I_m} \in \R^3 \st \sum_{\ell \in I_m}(-1)^{e^\ell_m} v^\ell_m=0 \right\},
\end{equation*}
for $m=1,\ldots,M$, where $W_m$ is endowed with the scalar product
\begin{equation*}
    \scal{(v^\ell_m),(w^\ell_m)}_{W_m}\eqdef \sum_{\ell \in I_m}v^\ell_m w^\ell_m e^{-|\gamma^\ell_*(e^\ell_m)|^2/2}.
\end{equation*}
Finally, let $Z$ be the product space
\begin{equation}\label{eq:DefZ}
    Z\eqdef \Pi_{m=1}^M W_m \times \Pi_{i=1}^N L^2(e^{-|\gamma^i_*|^2/2}\de s^i_*) \eqqcolon \Pi_{m=1}^M W_m \times \Pi_{i=1}^N L^2(\d\mu^i_*) .
\end{equation}
Observe that ${\rm j}:V\hookrightarrow Z$ compactly with the natural injection
\begin{equation}\label{eq:Injection}
\overline{\NN} \quad\overset{\rm j}{\mapsto}\quad \left( (\NN^\ell(e^\ell_m)) ,  \overline{\NN}\right)
\end{equation}

For $r_{\Gamma_*}>0$ small enough, we define the energy $\boldsymbol{{\rm G}}:B_{r_{\Gamma_*}} (0) \subset V\to [0,+\infty)$ by
\begin{equation}\label{eq:DefG}
    \boldsymbol{{\rm G}}(\overline{\NN}) \eqdef \sum_i \G \left(\gamma^i_* + \NN^i \nu^i_* + \TT^i \tau^i_* \right),
\end{equation}
where the $\TT^i$'s are adapted to the $\NN^i$'s (see \cref{def:Adapted}). Recalling \cref{lem:SufficientConditionsNetwork}, the immersions $\gamma^i_* + \NN^i \nu^i_* + \TT^i \tau^i_* $ define a triple junctions network.

In the next lemma we check the validity of sufficient conditions implying the desired inequality.

\begin{lemma}\label{lem:SuffConditionsLoja}
Let $\Gamma_*:G\to \R^2$ be a shrinker, where $G$ is a graph without endpoints. Let $V,Z,\boldsymbol{\rm G}$ be as above, and identify $Z^\star$ with ${\rm j}^\star(Z^\star)\subset V^\star$, for ${\rm j}$ as in \eqref{eq:Injection}.

Then the following hold.
\begin{enumerate}
    \item The functional $\boldsymbol{{\rm G}}:B_{r_{\Gamma_*}} (0) \subset V\to [0,+\infty)$ is analytic.

    \item The first variation operator $\delta \boldsymbol{\rm G} : V\to Z^\star$ is $Z^\star$-valued by setting
    \begin{equation}\label{eq:FirstVariationBold}
    \begin{split}
        \delta &\boldsymbol{\rm G} (\overline{\NN})[((v^\ell_m), \overline{X})]
    \\&= 
    \sum_{m \in J_G} \left\langle \bigg(
    (-1)^{1+e^\ell_m} \bigg[ 
    \scal{\tau^\ell(e^\ell_m), \nu^\ell_*(e^\ell_m)} + \sum_{j \in I_m} h_{\ell j}\scal{\tau^j(e^j_m) , \tau^j_*(e^j_m)}
     \bigg] e^{-|\gamma^{\ell}(e^\ell_m)|^2/2}e^{|\gamma^{\ell}_*(e^\ell_m)|^2/2} \bigg)
    ,(v^\ell_m)\right\rangle_{W_m} \\
    &\quad
    - \sum_i \bigg\langle
    \scal{(\gamma^i)^\perp+\boldsymbol{k}^i,\nu^i_*}e^{-|\gamma^i|^2/2}|\partial_x \gamma^i| e^{|\gamma^i_*|^2/2}|\partial_x \gamma^i_*|^{-1} 
    +\\
    &\qquad\qquad
   + \sum_j f_{ij}\chi \scal{(\gamma^j)^\perp+\boldsymbol{k}^j,\tau^j_*}e^{-|\gamma^j|^2/2}|\partial_x \gamma^j|e^{|\gamma^i_*|^2/2}|\partial_x \gamma^i_*|^{-1} +\\
    &\qquad\qquad
    + g_{ij}\chi(1-x) \scal{(\gamma^j)^\perp+\boldsymbol{k}^j,\tau^j_*}(1-x)e^{-|\gamma^j(1-x)|^2/2}|\partial_x \gamma^j|(1-x) e^{|\gamma^i_*|^2/2}|\partial_x \gamma^i_*|^{-1}
    , X^i
    \bigg\rangle_{L^2(\d\mu^i_*)}
\end{split}
    \end{equation}
where $f_{ij}, g_{ij}, h_{\ell j} \in \R$ depend on the topology of $G$, and $\tau^i, \boldsymbol{k}^i$ are referred to the immersions $\gamma^i \eqdef \gamma^i_*  + \NN^i \nu^i_* + \TT^i\tau^i_*$, with $\TT^i$ adapted to $\NN^i$.\\
Moreover $\delta \boldsymbol{\rm G} : V\to Z^\star$ is analytic.

If also the network defined by the immersions $\gamma^i$ is regular, then
\begin{equation}\label{eq:FirstVariationBoldRegular}
    \begin{split}
        \delta &\boldsymbol{\rm G}  (\overline{\NN})[((v^\ell_m), \overline{X})]
    \\&=
   - \sum_i \bigg\langle
    \scal{(\gamma^i)^\perp+\boldsymbol{k}^i,\nu^i_*}e^{-|\gamma^i|^2/2}|\partial_x \gamma^i| e^{|\gamma^i_*|^2/2}|\partial_x \gamma^i_*|^{-1} 
    +\\
    &\qquad\qquad
   + \sum_j f_{ij}\chi \scal{(\gamma^j)^\perp+\boldsymbol{k}^j,\tau^j_*}e^{-|\gamma^j|^2/2}|\partial_x \gamma^j|e^{|\gamma^i_*|^2/2}|\partial_x \gamma^i_*|^{-1} +\\
    &\qquad\qquad
    + g_{ij}\chi(1-x) \scal{(\gamma^j)^\perp+\boldsymbol{k}^j,\tau^j_*}(1-x)e^{-|\gamma^j(1-x)|^2/2}|\partial_x \gamma^j|(1-x) e^{|\gamma^i_*|^2/2}|\partial_x \gamma^i_*|^{-1}
    , X^i
    \bigg\rangle_{L^2(\d\mu^i_*)}.
    \end{split}
\end{equation}
    
\item The second variation $\delta^2 \boldsymbol{\rm G}_0 : V\to Z^\star $ at $0$ is $Z^\star$-valued by setting
    \begin{equation}\label{eq:SecondVariationBold}
    \begin{split}
        \delta^2 \boldsymbol{\rm G}_0 ( \overline{X} ) [((v^\ell_m), \overline{Z}) ]
        &=\sum_{m \in J_G}\left\langle \bigg((-1)^{1+e^\ell_m} \bigg(\partial_s X^\ell (e^\ell_m)  + \sum_{l \in I_m}\omega^l_m(X^l(e^l_m))
        \bigg) \bigg) 
        ,(v^\ell_m)\right\rangle_{W_m} + \\
        &\phantom{=}-
        \sum_i
        \left\langle   \partial^2_s X^i 
        +X^i
        + \sum_j \Omega_{ij}(\overline{X}) 
        ,Z^i \right\rangle_{L^2(\d\mu^i_*)} ,
    \end{split}
\end{equation}
where $\partial_s X^n= |\partial_x\gamma^n_*|^{-1}\partial_x X^n$ for any $n$, $\overline{X}\eqdef (X^1,\ldots, X^N)$, $\omega^l_m:\R\to\R$ is linear, and $\Omega_{ij}:V\to L^2(\d\mu^i_*)$ is a compact linear operator.\\
Moreover $\delta^2 \boldsymbol{\rm G}_0 : V\to Z^\star $ is a Fredholm operator of index zero.
\end{enumerate}
\end{lemma}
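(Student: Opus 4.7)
The plan is to verify the three items in turn. Formulas \eqref{eq:FirstVariationBold}, \eqref{eq:FirstVariationBoldRegular} and \eqref{eq:SecondVariationBold} are direct rewritings of \eqref{eq:FirstVariation}, \eqref{eq:FirstVariationRegular} and \eqref{eq:SecondVariation} in terms of the scalar products chosen on $Z$, so the actual functional-analytic content lies in the analyticity statements of items 1--2 and the Fredholm statement of item 3.

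For items 1 and 2, I would first note that by \eqref{eq:DefAdattati} and the Sobolev embedding $H^2\hookrightarrow C^1$, the assignment $\overline{\NN}\mapsto(\TT^i)_i$ is a bounded linear map $V\to[H^2(0,1)]^N$, so $\overline{\NN}\mapsto(\gamma^i)_i$ with $\gamma^i=\gamma^i_*+\NN^i\nu^i_*+\TT^i\tau^i_*$ is continuous and affine from $V$ into $[H^2(0,1;\R^2)]^N$. For $\|\overline{\NN}\|_V\le r_{\Gamma_*}$ small, $|\partial_x\gamma^i|$ stays uniformly bounded and bounded away from zero in $C^0$, so the integrand $e^{-|\gamma^i|^2/2}|\partial_x\gamma^i|$ is the composition of the real-analytic pointwise map $(y,v)\mapsto e^{-|y|^2/2}|v|$ on $\R^2\times(\R^2\setminus\{0\})$ with an affine continuous map into $C^0\times C^0$; composition of analytic maps and integration over $[0,1]$ preserve Banach-space analyticity, which proves item 1. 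Item 2 then follows by rewriting the junction contributions in \eqref{eq:FirstVariation} as pairings in $W_m$ (multiplying and dividing the boundary coefficients by $e^{-|\gamma^\ell_*(e^\ell_m)|^2/2}$) and the interior integrals as pairings in $L^2(\d\mu^i_*)$ (multiplying and dividing by $e^{-|\gamma^i_*|^2/2}|\partial_x\gamma^i_*|$); analyticity of $\delta\boldsymbol{{\rm G}}:V\to Z^\star$ is inherited because each resulting density depends analytically on $(\gamma^i,\partial_x\gamma^i,\boldsymbol{k}^i)$ via compositions of the real-analytic operations $y\mapsto e^{-|y|^2/2}$, $v\mapsto|v|$, $v\mapsto v/|v|$ with the affine analytic map $\overline{\NN}\mapsto(\gamma^i,\partial_x\gamma^i,\boldsymbol{k}^i)$ with values in $C^0$ or $L^2(\d\mu^i_*)$. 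The regular-network identity \eqref{eq:FirstVariationBoldRegular} reads off \eqref{eq:FirstVariationRegular} in the same way, since the bracketed boundary terms of \eqref{eq:FirstVariation} vanish when all inner tangents meet at angles $\tfrac23\pi$.

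Item 3 is the main obstacle. The formula \eqref{eq:SecondVariationBold} is the same rewriting applied to \eqref{eq:SecondVariation}. To prove that $\delta^2\boldsymbol{{\rm G}}_0:V\to Z^\star$ is Fredholm of index zero, I would split it as $A+K$, where $A$ collects the principal parts $X^i\mapsto-\partial_s^2X^i-X^i$ in the $L^2(\d\mu^i_*)$ factor and $X^\ell(e^\ell_m)\mapsto(-1)^{1+e^\ell_m}\partial_sX^\ell(e^\ell_m)$ in the $W_m$ factor, while $K$ gathers the zero-order interior operators $\Omega_{ij}$ and the boundary operators $\omega^l_m$. Compactness of $K:V\to Z^\star$ is immediate: the $\omega^l_m$ factor through the compactly-embedded trace $V\to[C^0]^N\to\R^{3M}$, while the $\Omega_{ij}$ are compact by \cref{prop:SecondVariation}. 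For $A$, I would exploit its variational origin: the continuous symmetric bilinear form $\mathcal{B}(X,Y)\eqdef\sum_i\int_0^1(\partial_sX^i\partial_sY^i-X^iY^i)\,\d\mu^i_*$ on $V\times V$ satisfies, after integration by parts (which uses precisely the junction continuity conditions built into \eqref{eq:DefV} to cancel the uncontrolled boundary pieces), $\mathcal{B}(X,Y)=A(X)[{\rm j}(Y)]$ for all $X,Y\in V$. The Gårding estimate $\mathcal{B}(X,X)+C\|X\|_Z^2\ge c\|X\|_{[H^1]^N}^2$, combined with the compact embedding $V\hookrightarrow Z$ and Lax--Milgram applied at the $H^1$-level, yields Fredholmness of the $H^1$-level operator associated to $\mathcal{B}$; elliptic regularity for the coupled junction boundary value problem then lifts this to Fredholmness of $A:V\to Z^\star$, and the symmetry of $\mathcal{B}$ forces the index to be zero.
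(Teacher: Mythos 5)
Your proposal is correct and follows essentially the same route as the paper: the variation formulas are read off from \cref{prop:FirstVariation} and \cref{prop:SecondVariation}, analyticity comes from composing analytic Nemytskii-type maps, and Fredholmness of index zero is obtained by exhibiting $\delta^2\boldsymbol{\rm G}_0$ as a compact perturbation of an invertible coercive second-order operator --- the paper does this explicitly via the operator $\mathscr{F}$ built from $-\partial_s^2X^i+X^i$ together with the compact corrections $\mathscr{F}_1,\mathscr{F}_2$, which is precisely what your G\aa rding-plus-Lax--Milgram step amounts to. The only slip is that the identity $\mathcal{B}(X,Y)=A(X)[{\rm j}(Y)]$ as you state it drops the first-order term $\sum_i\int\scal{\gamma^i_*,\tau^i_*}\,\partial_sX^i\,Y^i\de\mu^i_*$ generated by differentiating the Gaussian weight in the integration by parts; since this term is compact from $V$ into $Z^\star$ it should simply be absorbed into your $K$, and the conclusion is unaffected.
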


\begin{proof}
The proof follows by adapting the arguments in \cite[Section 3.3]{PludaPozzMinimalNetworks}.
\begin{enumerate}
    \item Analyticity of $\boldsymbol{\rm G}$ easily follows as in \cite[Lemma 3.11]{PludaPozzMinimalNetworks}.
    
    \item The expression for the first variation $\delta \boldsymbol{\rm G}$ and the fact that it is $Z^\star$-valued follow from \cref{prop:FirstVariation}, while its analyticity follows as in \cite[Lemma 3.11]{PludaPozzMinimalNetworks}.
    
    \item The expression for the second variation $\delta \boldsymbol{\rm G}_0$ at zero and the fact that it is $Z^\star$-valued follow by polarizing the formula in \cref{prop:SecondVariation}.\\
    Arguing as in \cite[Lemma 3.12]{PludaPozzMinimalNetworks} one easily checks that the linear and continuous operator $\mathscr{F}: V \to Z^\star$ given by
    \begin{equation*}
    \mathscr{F}(\overline{X}) \eqdef {\rm I} \left( 
    \bigg((-1)^{1+e^{\ell}_m}\bigg(\partial_s X^\ell(e^\ell_m) - \frac13 \sum_{\ell \in I_m} \partial_s X^\ell(e^\ell_m) \bigg)\bigg),
    \bigg(
    -\partial^2_s X^i +  X^i \bigg)\right),
    \end{equation*}
    where ${\rm I}:Z\to Z^\star$ is the natural isometry, is invertible, and thus it is Fredholm of index $0$. Recalling that Fredholmness is stable with respect to addition of compact operators, we consider
    \begin{equation*}
        V \ni \overline{X} 
        \quad\overset{\mathscr{F}_1}{\mapsto}\quad
          \left( \bigg( 
    (-1)^{e^{\ell}_m} \frac13 \sum_{\ell \in I_m} \partial_s X^\ell(e^\ell_m) \bigg),
    0\right) \in Z^\star ,
    \end{equation*}
    and
    \begin{equation*}
        V \ni \overline{X} 
        \quad\overset{\mathscr{F}_2}{\mapsto}\quad
          \left( \bigg( 
         (-1)^{1+e^{\ell}_m}\sum_{l \in I_m} \omega^l_m(X^l(e^l_m))
    \bigg),
    \bigg(
    -2X^i + \sum_j \Omega_{ij}(\overline{X}) \bigg)
    \right) \in Z^\star ,
    \end{equation*}
    where vectors on the right hand sides above represents elements of $Z^\star$ acting on elements of $Z$ by the obvious pairing. Hence $\mathscr{F}_1$, $\mathscr{F}_2$ are compact, and then $\delta^2 \boldsymbol{\rm G}_0= \mathscr{F} - \mathscr{F}_1 + \mathscr{F}_2$ is Fredholm of index zero.
\end{enumerate}
\end{proof}

As a consequence of the previous lemma, we deduce the desired \L ojasiewicz--Simon inequality for the functional $\G$.

\begin{cor}[\L ojasiewicz--Simon inequality at compact shrinkers]\label{cor:LojaShrinker}
Let $\Gamma_*:G\to\R^2$ be a shrinker, where $G$ is a graph without endpoints. Then there exist $C_{\rm LS},\sigma >0$ and $\theta \in (0,\tfrac12]$ such that the following holds.

If $\Gamma:G\to\R^2$ is a regular network of class $H^2$ such that
\begin{equation*}
     \left(\sum_i \| \gamma^i_* - \gamma^i \|_{H^2(\d x)}^2 \right)^{\frac12} \le \sigma
\end{equation*}
then
 \begin{equation}\label{eq:LojaNetworkMinimali}
     \left|\G(\Gamma)-\G(\Gamma_*) \right|^{1-\theta}
     \le C_{\rm LS} \left( \sum_i \int_{\gamma^i} |(\gamma^i)^\perp + \boldsymbol{k}^i|^2 e^{-|\gamma^i|^2/2} \right)^{\frac12}.
\end{equation}
\end{cor}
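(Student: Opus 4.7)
The plan is to apply an abstract \L ojasiewicz--Simon inequality (e.g.\ \cite{Ch03,ChFaSc09}) to the functional $\boldsymbol{\rm G}:B_{r_{\Gamma_*}}(0) \subset V \to [0,+\infty)$ defined in \eqref{eq:DefG}. The three items of \cref{lem:SuffConditionsLoja} provide exactly the hypotheses required by the abstract theorem: analyticity of $\boldsymbol{\rm G}$, analyticity of $\delta \boldsymbol{\rm G}: V \to Z^\star$, and Fredholmness of index zero of $\delta^2 \boldsymbol{\rm G}_0$. One therefore obtains constants $\widetilde{C}, \widetilde{\sigma} > 0$ and $\theta \in (0,\tfrac12]$ such that
\begin{equation*}
 |\boldsymbol{\rm G}(\overline{\NN}) - \boldsymbol{\rm G}(0)|^{1-\theta} \leq \widetilde{C}\, \|\delta\boldsymbol{\rm G}(\overline{\NN})\|_{Z^\star}
\end{equation*}
whenever $\overline{\NN} \in V$ satisfies $\|\overline{\NN}\|_V \leq \widetilde{\sigma}$.

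Next, I would pass from this abstract inequality on $V$ to the geometric inequality \eqref{eq:LojaNetworkMinimali} for a regular network $\Gamma$ which is $H^2$-close to $\Gamma_*$. Applying \cref{prop:ParametrizzazioneTNtempo} to the stationary family $\Gamma_t \equiv \Gamma$, the smallness assumption $\sum_i \|\gamma^i - \gamma^i_*\|_{H^2} \leq \sigma$ provides reparametrizations $\varphi^i$ and functions $\NN^i, \TT^i$ with $\TT^i$ adapted to $\NN^i$ such that $\gamma^i \circ \varphi^i = \gamma^i_* + \NN^i \nu^i_* + \TT^i \tau^i_*$; the estimate \eqref{eq:StimaH2NiTempo} lets one choose $\sigma$ so small that $\overline{\NN} = (\NN^1,\ldots,\NN^N) \in V$ satisfies $\|\overline{\NN}\|_V \leq \widetilde{\sigma}$. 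Since $\G$ is invariant under edgewise reparametrization, $\boldsymbol{\rm G}(\overline{\NN}) = \G(\Gamma \circ \varphi) = \G(\Gamma)$ and $\boldsymbol{\rm G}(0) = \G(\Gamma_*)$, so the left-hand side of the abstract inequality matches the one in \eqref{eq:LojaNetworkMinimali}.

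It then remains to bound $\|\delta \boldsymbol{\rm G}(\overline{\NN})\|_{Z^\star}$ from above by the right-hand side of \eqref{eq:LojaNetworkMinimali}. Since $\Gamma \circ \varphi$ is regular, formula \eqref{eq:FirstVariationBoldRegular} applies: the $W_m$-components of $\delta \boldsymbol{\rm G}(\overline{\NN})$ vanish and only the $L^2(\d\mu^i_*)$-components survive. Because $\overline{\NN}$ is small in $H^2 \hookrightarrow C^1$, the weights $e^{\pm |\gamma^i|^2/2}, e^{|\gamma^i_*|^2/2}$, the densities $|\partial_x\gamma^i|, |\partial_x\gamma^i_*|^{-1}$ and the inner products against unit vectors are uniformly controlled. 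Hence each $i$-th integrand in \eqref{eq:FirstVariationBoldRegular} is bounded pointwise, in absolute value, by a uniform constant times $|(\gamma^i)^\perp + \boldsymbol{k}^i|$, yielding
\begin{equation*}
    \|\delta \boldsymbol{\rm G}(\overline{\NN})\|_{Z^\star}^2 \leq C \sum_i \int_0^1 |(\gamma^i)^\perp + \boldsymbol{k}^i|^2 \, e^{-|\gamma^i_*|^2/2}\, |\partial_x \gamma^i_*|\, \de x.
\end{equation*}
Exploiting the $C^1$-closeness of $\gamma^i$ to $\gamma^i_*$ once more, the measure $e^{-|\gamma^i_*|^2/2}|\partial_x\gamma^i_*|\de x$ is comparable to $e^{-|\gamma^i|^2/2}\de s_{\gamma^i}$, and the right-hand side of \eqref{eq:LojaNetworkMinimali} follows.

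The main technical point to handle carefully is the last step: the factors $e^{|\gamma^i_*|^2/2}|\partial_x\gamma^i_*|^{-1}$ inserted in \eqref{eq:FirstVariationBoldRegular} in order to realize $\delta\boldsymbol{\rm G}$ as an element of $Z^\star$ with respect to the weighted $L^2(\d\mu^i_*)$-duality must be tracked and reconciled with the $\Gamma$-intrinsic weighted arclength appearing in \eqref{eq:LojaNetworkMinimali}. In addition, one has to check that the tangential contributions of the form $\langle(\gamma^j)^\perp + \boldsymbol{k}^j, \tau^j_*\rangle$, coming with the topological coefficients $f_{ij}, g_{ij}$ and the cut-off $\chi$, can be absorbed into the full pointwise vector norm $|(\gamma^j)^\perp + \boldsymbol{k}^j|$ without any cancellation between normal and tangential parts being required---this works precisely because $(\gamma^j)^\perp + \boldsymbol{k}^j$ is a normal vector along $\gamma^j$ and $\tau^j_*$ differs from $\tau^j$ by a uniformly small vector.
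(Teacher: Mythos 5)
Your proposal is correct and follows essentially the same route as the paper: the authors likewise combine \cref{prop:ParametrizzazioneTNtempo} (with $\Gamma_t\equiv\Gamma$) and the three properties of \cref{lem:SuffConditionsLoja} with an abstract \L ojasiewicz--Simon theorem (they invoke \cite[Corollary 2.6]{PozzettaLoja} rather than \cite{Ch03,ChFaSc09} directly), and delegate the bookkeeping of weights and the comparison between $\|\delta\boldsymbol{\rm G}(\overline{\NN})\|_{Z^\star}$ and the intrinsic weighted $L^2$-norm of $(\gamma^i)^\perp+\boldsymbol{k}^i$ to \cite[Corollary 3.15]{PludaPozzMinimalNetworks}. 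The details you supply for that last step, including the use of \eqref{eq:FirstVariationBoldRegular} to kill the junction terms, are exactly the intended argument.
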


\begin{proof}
The proof immediately follows combining \cref{prop:ParametrizzazioneTNtempo} with $\Gamma_t=\Gamma$ for any $t$ and \cref{lem:SuffConditionsLoja}, applying \cite[Corollary 2.6]{PozzettaLoja} which yields the desired \L ojasiewicz--Simon inequality as a consequence of the properties proved in \cref{lem:SuffConditionsLoja}. For further details we refer to the strategy explained in \cite[Corollary 3.15]{PludaPozzMinimalNetworks}.
\end{proof}

\subsection{Convergence of the rescaled motion by curvature}

In this section we prove our main result \cref{thm:Uniqueness}, which is, in turn, implied by the following theorem stating that subconvergence of a rescaled motion by curvature improves to full convergence.

\begin{teo}\label{thm:Convergence}
Let $\widetilde{\Gamma}_\t$ be a rescaled motion by curvature at the origin $0$ and maximal time of existence $T$ of a motion by curvature $\Gamma_t:G\to\R^2$, where $G$ is a graph without endpoints.\\
Suppose that there is a sequence $\t_n\to+\infty$ such that $\widetilde{\Gamma}_{\t_n}$ converges to a regular network $\Gamma_*:G\to\R^2$ in $C^k$ on $G$ for any $k \in \N$, up to reparametrizations.

Then there exists the full limit $\lim_{\t\to+\infty} \widetilde{\Gamma}_\t = \Gamma_*$ in $C^k$ for any $k$, up to reparametrizations.
\end{teo}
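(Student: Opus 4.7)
The strategy follows the standard \L ojasiewicz--Simon scheme, in the form of \cite{PozzettaLoja} (see also the discussion of this approach in \cite{PludaPozzMinimalNetworks}): I use the gradient flow structure together with \cref{cor:LojaShrinker} at $\Gamma_*$ to upgrade subconvergence along $\t_n$ into full convergence. Since $\widetilde\Gamma_\t$ is the weighted $L^2$-gradient flow of $\G$, one has the Huisken-type dissipation identity
\begin{equation*}
\frac{\de}{\de\t} \G(\widetilde\Gamma_\t) \;=\; -\sum_i \int_{\widetilde\gamma^i_\t} \bigl|\widetilde{\boldsymbol{k}}^i_\t + \scal{\widetilde\gamma^i_\t, \widetilde\nu^i_\t}\widetilde\nu^i_\t\bigr|^2 e^{-|\widetilde\gamma^i_\t|^2/2}\,\de s \;\eqqcolon\; -\mathcal{V}(\t)^2.
\end{equation*}
In particular $\G(\widetilde\Gamma_\t)$ is nonincreasing, and the hypothesis $\widetilde\Gamma_{\t_n}\to\Gamma_*$ yields $\G(\widetilde\Gamma_\t)\searrow \G(\Gamma_*)$. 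If $\G(\widetilde\Gamma_{\bar\t})=\G(\Gamma_*)$ at some time, then $\mathcal V\equiv 0$ thereafter and the conclusion is immediate; hence I may assume $\G(\widetilde\Gamma_\t)>\G(\Gamma_*)$ for all $\t$.

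Let $\sigma, C_{\rm LS},\theta$ be the constants produced by \cref{cor:LojaShrinker} at $\Gamma_*$, and choose $n$ so large that, after applying \cref{prop:ParametrizzazioneTNtempo} at time $\t_n$ to write $\widetilde\Gamma_{\t_n}$ as $\gamma^i_*+\NN^i_{\t_n}\nu^i_*+\TT^i_{\t_n}\tau^i_*$, the quantity $\sum_i\|\NN^i_{\t_n}\|_{H^2}$ is arbitrarily small. Extending this graph representation maximally forward in time via \cref{prop:ParametrizzazioneTNtempo} (the evolution of $\NN^i_\t$ becomes a quasilinear parabolic system equivalent to the reparametrized special flow \eqref{eq:SpecialFlow} with shrinker correction), define
\begin{equation*}
T_* \;\eqdef\; \sup\Bigl\{ T\ge \t_n \;:\; \sum_i \|\NN^i_\t\|_{H^2}\le \sigma \quad\forall\t\in[\t_n,T]\Bigr\}.
\end{equation*}
For $\t\in[\t_n,T_*)$, \cref{cor:LojaShrinker} applies and gives $(\G(\widetilde\Gamma_\t)-\G(\Gamma_*))^{1-\theta}\le C_{\rm LS}\mathcal V(\t)$; combining with the dissipation identity,
\begin{equation*}
-\frac{\de}{\de\t} \bigl(\G(\widetilde\Gamma_\t)-\G(\Gamma_*)\bigr)^\theta \;=\; \theta (\G-\G_*)^{\theta-1} \mathcal V(\t)^2 \;\ge\; \frac{\theta}{C_{\rm LS}} \mathcal V(\t),
\end{equation*}
and integrating in $\t$,
\begin{equation*}
\int_{\t_n}^{T_*} \mathcal V(\t)\,\de\t \;\le\; \frac{C_{\rm LS}}{\theta} \bigl(\G(\widetilde\Gamma_{\t_n})-\G(\Gamma_*)\bigr)^\theta,
\end{equation*}
which can be made as small as desired by enlarging $n$.

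I then argue that this forces $T_*=+\infty$. Because the tangential components $\TT^i_\t$ are determined by the normal ones via the adapted cut-off structure of \cref{def:Adapted}, the weighted $L^2$-norm of $\partial_\t\NN^i_\t$ is controlled by $\mathcal V(\t)$, so $\sum_i \|\NN^i_\t-\NN^i_{\t_n}\|_{L^2} \lesssim \int_{\t_n}^\t\mathcal V(s)\,\de s$ stays small on $[\t_n,T_*)$. Standard parabolic regularity for the special flow \eqref{eq:SpecialFlow} near the smooth network $\Gamma_*$ provides uniform $C^k$-bounds for $\NN^i_\t$ as long as it remains $H^2$-small, with the triple junction boundary conditions handled via \cref{lem:SufficientConditionsNetwork}. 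Interpolation between $L^2$ and $C^k$ then upgrades the $L^2$-smallness of $\NN^i_\t-\NN^i_{\t_n}$ to $H^2$-smallness, and a continuity argument shows that for $n$ large $\sum_i\|\NN^i_\t\|_{H^2}$ never reaches $\sigma$, hence $T_*=+\infty$.

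With $T_*=+\infty$, the integrability $\int_{\t_n}^\infty\mathcal V(\t)\,\de\t<+\infty$ makes $\NN^i_\t$ Cauchy in $L^2$ as $\t\to+\infty$, and the uniform $C^k$-bounds plus interpolation promote this to convergence in $C^k$ for every $k$. Matching with the subsequential limit along $\t_n$, the full limit must be $\Gamma_*$ (up to the reparametrizations $\varphi^i_\t$ of \cref{prop:ParametrizzazioneTNtempo}). The main obstacle in this plan is the bootstrap step: the gradient flow controls the velocity only in weighted $L^2$, while \cref{cor:LojaShrinker} is stated in $H^2$, so closing the continuity argument requires extracting enough parabolic smoothing from the reparametrized special flow near $\Gamma_*$, uniformly in $\t$, all carried out along the triple-junction graph parametrization developed in \cite{PludaPozzMinimalNetworks}.
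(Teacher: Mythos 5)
Your plan is correct and follows essentially the same route as the paper: reduce to the case $\G(\widetilde\Gamma_\t)>\G(\Gamma_*)$, combine the dissipation identity with \cref{cor:LojaShrinker} to get $\int \mathcal V\,\de\t$ small, control $\|\partial_\t\overline{\NN}_\t\|_{L^2}$ by $\mathcal V$ via the adapted tangential structure, and close the open--closed argument on the maximal time of validity of the graph representation by interpolating the resulting $L^2$-smallness against uniform higher-order bounds. The only (inessential) difference is that the paper obtains those uniform $H^k$-bounds by translating $\widehat\Gamma_\t$ back into a parabolically rescaled motion by curvature and invoking the a priori estimates of \cite[Proposition 5.8]{mannovplusch} on unit time intervals, rather than arguing parabolic regularity directly on the reparametrized special flow as you sketch.
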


\begin{proof}
We first state some interpolation inequalities. For any $k \in \N$ with $k\ge 1$ there exist $\lambda_k>0,\zeta_k\in(0,1)$ such that
\begin{equation}\label{eq:InterpolationSobolevGenerale}
    \|u \|_{H^k(\d x)} \le \lambda_k \| u \|_{L^2(\d x)}^{\zeta_k} \| u \|_{H^{k+1}(\d x)}^{1-\zeta_k},
\end{equation}
for any $u \in H^{k+1}\left((0,1);\R^N\right)$. We shall drop the subscript $k$ when $k=2$.

\medskip

Next we observe that for any $k\in \N$ with $k\ge 3$ and $r>0$ there is $C_k=C_k(r,\Gamma_*) >0$ such that whenever $\widehat\Gamma_\t:G\to \R^2$ is a rescaled motion by curvature with parametrizations $\widehat{\gamma}^i_\t = \gamma^i_* + \NN^i_\t\nu^i_*+ \TT^i_\t\tau^i_*$, i.e., there holds $(\partial_\t \widehat\gamma^i_\t)^\perp = (\widehat\gamma^i_\t)^\perp + \widehat{\boldsymbol{k}}^i_\t$, for $\overline{\NN}_\t \in B_r(0)\subset V$ in the notation of \eqref{eq:DefG}, where the $\TT^i_\t$'s are adapted, for $\t \in [\tau_1,\tau_2]$ with $\tau_1>1$, then
\begin{equation}\label{eq:BoundCk}
    \|\overline{\NN}_\t\|_{H^k(\d x)} \le C_k,
\end{equation}
for $\t \in [\tau_1,\tau_2]$.

Indeed, by \cref{rem:EquivalenzaTangentRescaled} the motion $\widehat\Gamma_\t$ corresponds to a parabolically rescaled motion by curvature $\Gamma^\mu_t$ such that $\Gamma^\mu_{-\frac12}= \widehat\Gamma_{\tau_1}$ via the identities
\begin{equation*}
    \Gamma^\mu_t = \sqrt{-2t}\,\widehat{\Gamma}_{\log(\mu/\sqrt{-t})}
    \qquad
    \mu^2 = \frac{e^{2\tau_{1}}}{2}.
\end{equation*}
Let $\overline{t}\in(-1/2,0)$ be such that $\tau_2= \log\left(\frac{\mu}{\sqrt{-\overline{t}}} \right)$. Assume first that $\overline{t}\le-\tfrac14$. Then $\Gamma^\mu_t$ is a motion by curvature for $t \in [-1/2,\overline{t}]$ with curvature uniformly bounded in $L^2(\d s)$ and with length of the edges uniformly bounded from below away from zero. Recalling \cite[Proposition 5.8]{mannovplusch} and since $\sqrt{-2t} \in [1/\sqrt{2},1]$ for $t \in [-1/2,\overline{t}]$, it is readily checked that this implies \eqref{eq:BoundCk} (we refer the reader to the argument in the proof of \cite[Theorem 5.2]{PludaPozzMinimalNetworks}). In case $\overline{t}>-\tfrac14$, then the previous argument implies estimates \eqref{eq:BoundCk} for $\t \in [\tau_1, \log(\mu/\sqrt{1/4})]\subset [\tau_1,\tau_2]$. Hence the argument can be iterated with $\log(\mu/\sqrt{1/4})$ in place of $\tau_1$. Iterating again and again finitely many times we get that \eqref{eq:BoundCk} holds on $[\tau_1,\tau_2]$.

\medskip

The proof continues adapting the strategy of \cite[Theorem 5.2]{PludaPozzMinimalNetworks}.
Without loss of generality we can assume that $\G(\widetilde\Gamma_\t)>\G(\Gamma_*)$ for any $\t$, for otherwise the rescaled motion by curvature would be stationary, and the thesis would follow trivially.

Let $\sigma,\theta,C_{\rm LS}$ be as in \cref{cor:LojaShrinker}.
For $r_{\Gamma_*}>0$ suitably small, whenever $\widehat\gamma^i\eqdef \gamma^i_*+ \NN^i\nu^i_*+\TT^i\tau^i_*$ is a smooth regular network, for $\overline{\NN}\in B_{r_{\Gamma_*}}(0)\subset V$ in the notation of \eqref{eq:DefG}, where the $\TT^i$'s are adapted, then
\begin{enumerate}[label={\normalfont{\color{blue}\arabic*)}}]

    \item \label{it:r1} there is $r_\sigma \in (0,r_{\Gamma_*})$ such that if $\overline{N}\in B_{r_\sigma}(0)\subset V$, then $\left(\sum_i \| \gamma^i_* - \widehat\gamma^i \|_{H^2(\d x)}^2 \right)^{\frac12} \le \sigma$;
    
    \item \label{it:r1bis} there is $e_*>0$ such that $e^{-|\widehat\gamma^i|^2/2}\ge e_*^2$ for any $i$;
    
    \item \label{it:r2} there exists a constant $C_G>0$, depending only on the graph $G$ and $\Gamma_*$, such that
    \begin{equation*}
    \begin{split}
        &\scal{\widehat\nu^i , \nu^i_*}\ge \frac34 , \qquad\qquad |\scal{\widehat\nu^i , \tau^i_*}| < \frac{1}{C_G}, \\
        &\sum_{m \in J_G}\sum_{\ell \in I_m} \left| a^\ell (x) \scal{\widehat\nu^\ell , \nu^\ell_*}
        +
        \scal{\widehat\nu^\ell , \tau^\ell_*}
        \chi(x)\mathscr{L}^\ell_m (a^{i_m}(x),a^{j_m}(x) )
        \right|^2
        \ge \frac{1}{C_G}\sum_i |a^i(x)|^2
        ,
    \end{split}
    \end{equation*}
    where $\widehat\nu^i$ is the normal vector of $\widehat\gamma^i$, and $i_m$ (resp. $j_m$) denotes the minimal (resp. intermediate) element of $I_m$, for any continuous functions $a^1,\ldots,a^N$;
    
    \item \label{it:r3} there exist $c_1,c_2>0$ such that
    \begin{equation*}
        c_1\le |\partial_x \widehat\gamma^i|^{-1} \le c_2,
    \end{equation*}
    for any $i$.
\end{enumerate}
For any $k\in \N$ with $k\ge3$ let $C_k$ be given by \eqref{eq:BoundCk} with $r=r_\sigma$.\\
By assumptions and by \cref{prop:ParametrizzazioneTNtempo} there exist times $\t_2>\t_1>0$ and $\NN^i_\t,\TT^i_\t,\varphi^i_\t$ as in \cref{prop:ParametrizzazioneTNtempo} such that
\begin{equation}\label{eq:zzxxz}
    ( \G(\widetilde\Gamma_{\t_1})-\G(\Gamma_*) )^\theta < \frac{\theta e_*}{(100 \lambda C_3)^{\frac{1}{\zeta}} C_{\rm LS} \sqrt{C_G\, c_2}} r_\sigma^{\frac{1}{\zeta}}
\end{equation}
\begin{equation}\label{eq:zzxx}
    \begin{split}
    \widetilde\gamma^i_\t\circ \sigma^i \circ \varphi^i_\t = \gamma^i_* + \NN^i_\t \nu^i_* + \TT^i_\t \tau^i_* \eqqcolon \widehat\gamma^i_\t,
    \qquad
    \overline{\NN}_\t\in B_{\frac{r_\sigma}{2}}(0)\subset V,
    \end{split}
\end{equation}
for any $\t \in [\t_1,\t_2]$. Let $S \in [\t_2,+\infty]$ be the supremum of times $s$ such that \eqref{eq:zzxx} holds on $\t \in [\t_1,s]$ for some $\NN^i_\t,\TT^i_\t,\varphi^i_\t$ as in \cref{prop:ParametrizzazioneTNtempo}.

Letting $H(\t)\eqdef (\G(\widetilde\Gamma_\t)-\G(\Gamma_*))^\theta>0$, we can differentiate $H$ to get
\begin{equation*}
    \begin{split}
        -\frac{\d}{\d \t} H(\t) 
        &= \theta H^{\frac{\theta-1}{\theta}}(\t) \sum_i \int_{\widetilde\gamma^i_\t} |(\widetilde\gamma^i_\t)^\perp + \widetilde{\boldsymbol{k}}^i_\t |^2 e^{-|\widetilde\gamma^i_\t|^2/2} 
        \\&= \theta H^{\frac{\theta-1}{\theta}}(\t) \bigg( \sum_i \int_{\widehat\gamma^i_\t} |(\widehat\gamma^i_\t)^\perp + \widehat{\boldsymbol{k}}^i_\t |^2 e^{-|\widehat\gamma^i_\t|^2/2} \bigg)^{\frac12} \left\| (\partial_\t\widehat\Gamma_\t)^\perp\right\|_{L^2(\widehat\mu_t)} \\
        & \overset{\ref{it:r1}}{\ge} \frac{\theta}{C_{\rm LS}}  \left\| (\partial_\t\widehat\Gamma_\t)^\perp\right\|_{L^2(\widehat\mu_t)},
    \end{split}
\end{equation*}
for any $\t \in [\t_1,S)$, where we denoted $ \left\| (\partial_\t\widehat\Gamma_\t)^\perp\right\|_{L^2(\widehat\mu_\t)}^2 \eqdef \sum_i  \int |(\partial_t\widehat\gamma^i_\t)^\perp|^2 \de \widehat\mu^i_\t $, and $\widehat\mu^i_\t\eqdef e^{-|\widehat\gamma^i_\t|^2/2}|\partial_x\widehat\gamma^i_\t|\de x$, and we applied \cref{cor:LojaShrinker} thanks to \ref{it:r1}.
Exploiting \eqref{eq:zzxx}, the above estimate becomes
\begin{equation*}
    \begin{split}
        -\frac{\d}{\d \t} H 
        &\ge \frac{\theta}{C_{\rm LS}}\left( \sum_i  \int_{\widehat\gamma^i_\t} |(\partial_\t\widehat\gamma^i_\t)^\perp|^2 e^{-|\widehat\gamma^i_\t|^2/2} \right)^{\frac12} 
        \overset{\ref{it:r1bis}}{\ge}
        \frac{\theta e_*}{C_{\rm LS}}\left( \sum_i  \int_{\widehat\gamma^i_\t} |\partial_\t \NN^i_\t \scal{\nu^i_\t,\nu^i_*} + \partial_\t\TT^i_\t \scal{\nu^i_\t,\tau^i_*} |^2   \right)^{\frac12} \\
        &\ge \frac{\theta e_*}{C_{\rm LS}}\left( \frac12\sum_{m \in J_G} \sum_{\ell \in I_m}  \int_{\widehat\gamma^\ell_\t} |\partial_\t \NN^\ell_\t \scal{\nu^\ell_\t,\nu^\ell_*} + \partial_\t\TT^\ell_\t \scal{\nu^\ell_\t,\tau^\ell_*} |^2  \right)^{\frac12} \\
        &\overset{\ref{it:r2}}{\ge}  \frac{\theta e_*}{C_{\rm LS}\sqrt{C_G}} \left( \sum_i \int_{\widehat\gamma^i_\t} |\partial_\t \NN^i_\t|^2 \right)^{\frac12} \\
        &\overset{\ref{it:r3}}{\ge}  \frac{\theta e_*}{C_{\rm LS}\sqrt{C_G\, c_2}} \left( \sum_i \int_0^1 |\partial_\t \NN^i_\t|^2 \de x \right)^{\frac12},
    \end{split}
\end{equation*}
for any $\t \in [\t_1,S)$. Hence
\begin{align}
    \left\|\overline{\NN}_{s_2} - \overline{\NN}_{s_1}\right\|_{L^2(\d x)} 
        &\le \int_{s_1}^{s_2} \left\|  \partial_\t \overline{\NN}_\t  \right\|_{L^2(\d x)} \de t \le \frac{C_{\rm LS}\sqrt{C_G\, c_2}}{\theta e_*} H(\t_1) 
        \label{eq:Cauchy1}
        \\ & \overset{\eqref{eq:zzxxz}}{\le}\frac{r_\sigma^{\frac{1}{\zeta}}}{(100 \lambda C_3)^{\frac{1}{\zeta}}},
        \label{eq:Cauchy2}
\end{align}
for any $\t_1\le s_1< s_2 < S$.
By \eqref{eq:InterpolationSobolevGenerale} with $k=2$, and by \eqref{eq:BoundCk} we obtain
\begin{equation*}
    \left\|\overline{\NN}_{s_2} - \overline{\NN}_{s_1}\right\|_{H^2(\d x)} \le \frac{r_\sigma}{50},
\end{equation*}
for any $\t_1\le s_1< s_2 < S$. Since $\overline{\NN}_{\t_1} \in B_{\frac{r_\sigma}{2}}(0)\subset V$, arguing by contradiction one gets that $S=+\infty$. Since $H(\t)\to 0$ as $\t\to+\infty$, then \eqref{eq:Cauchy1} implies that $\overline{\NN}_\t$ is Cauchy in $L^2(\d x)$ for $\t\to+\infty$. Interpolating as before, we deduce that there exists the full limit $\lim_{\t\to+\infty} \widetilde{\Gamma}_\t = \Gamma_*$ in $C^k$ for any $k$.
\end{proof}

\begin{proof}[Proof of \cref{thm:Uniqueness}]
The statement follows from \cref{thm:Convergence} together with the equivalence recalled in \cref{rem:EquivalenzaTangentRescaled}.
\end{proof}

\bibliographystyle{plain}
\addcontentsline{toc}{section}{References}
\bibliography{references} 
\end{document}